\documentclass{amsart}
\usepackage{amsmath, amsthm, amssymb}
\usepackage{verbatim}
\usepackage{tikz}
\usetikzlibrary{matrix}

\usepackage[mathscr]{eucal} 
\usepackage{mathrsfs} 
\usepackage{cancel}

\usepackage{enumitem, hyperref}
\makeatletter
\def\namedlabel#1#2{\begingroup
    #2%
    \def\@currentlabel{#2}%
    \phantomsection\label{#1}\endgroup
}

\renewcommand{\d}{\partial}

\newcommand{\wt}[1]{\widetilde{#1}}
\newcommand{\lc}{\left<}
\newcommand{\rc}{\right>}

\newcommand{\eps}{\epsilon}
\newcommand{\veps}{\varepsilon}

\newcommand{\al}{\alpha} 
\newcommand{\ze}{\zeta}

\newcommand{\de}{\delta}

\newcommand{\om}{\omega}

\newcommand{\vka}{\varkappa}

\newcommand{\Om}{\Omega}
\newcommand{\De}{\Delta}

\newcommand{\cP}{\mathcal{P}}

\newcommand{\cH}{\mathcal{H}}
\newcommand{\cL}{\mathcal{L}}

\newcommand{\cO}{\mathcal{O}}

\newcommand{\bB}{\mathbb{B}}
\newcommand{\bK}{\mathbb{K}}
\newcommand{\bP}{\mathbb{P}}
\newcommand{\bR}{\mathbb{R}}

\newcommand{\bC}{\mathbb{C}}

\newcommand{\bN}{\mathbb{N}}
\newcommand{\vpi}{\varpi}

\newcommand{\wh}[1]{\widehat{#1}}
\newcommand{\ov}[1]{\overline{#1}}

\newcommand{\uU}[1]{{\rm U}#1}

\newcommand{\cali}[1]{\mathscr{#1}}

\newcommand{\Cc}{\cali{C}}

\newtheorem{thm}{Theorem}
\newtheorem{prop}[thm]{Proposition}
\newtheorem{lem}[thm]{Lemma}
\newtheorem{cor}[thm]{Corollary}

\theoremstyle{definition}

\newtheorem{defn}[thm]{Definition}
\newtheorem{remark}[thm]{Remark}

\newtheorem{expl}[thm]{Example}
\newtheorem*{rmkx}{Remark}

\numberwithin{thm}{section}
\numberwithin{equation}{section}

\renewcommand{\[}{\begin{equation}}
\renewcommand{\]}{\end{equation}}

\title[Continuity of extremal functions 
]{Characterization of continuity of Siciak-Zaharjuta extremal functions on compact Hermitian manifolds
}

\author{Hyunsoo Ahn} 
\address{Department of Mathematical Sciences, KAIST, 291 Daehak-ro, Yuseong-gu, Daejeon 34141, South Korea}
\email{kakapoolove@kaist.ac.kr}

\begin{document} 


\begin{abstract} 
For a 
compact subset in a compact Hermitian manifold,
we prove that the continuity of the extremal function at a given point in the set
is 
a local property 
and that the continuity of a weighted extremal function follows from the continuities of the extremal function and the weight function. 
These results are generalizations of the results of Nguyen \cite{Ng24} on compact K\"ahler manifolds. 
Moreover, for a compact subset in a compact Hermitian manifold, at the point level and accordingly at the global level, we characterize the continuity of the extremal function via the local \(L\)-regularity, which is equivalent to the weak local \(L\)-regularity. We also show that the \(L\)-regularity of a compact subset in \(\mathbb{C}^n\) at a star center implies the local \(L\)-regularity. Consequently, a convex compact \(L\)-regular subset in \(\mathbb{C}^n\) is locally \(L\)-regular. 
\end{abstract}

\keywords{
extremal function, 
Hermitian manifold, 
continuous, 
locally $L$-regular, 
weakly locally $L$-regular
}

\maketitle

\section{Introduction}
{\em Background.}
The Siciak-Zaharjuta extremal function was historically defined first using polynomials by Siciak in 1962 and subsequently characterized using plurisubharmonic functions by Zaharjuta in the 1970s. 
It is considered as an extension of one-dimensional Green function with a logarithmic pole at infinity to higher-dimensional complex spaces using plurisubharmonic (psh for short) functions rather than just subharmonic functions. 


For a subset \(E\) in \(\mathbb{C}^n\), the classical Siciak-Zaharjuta extremal function of \(E\) introduced in \cite{Si62,Siciak81,Za76} is defined on \(\mathbb{C}^n\) by
$$L_{E}(z) := \sup\{ u(z) : u \in \mathcal{L}, \, u|_E \leq 0\},\quad z\in \mathbb{C}^n,$$
where the Lelong class \(\mathcal{L}\) is
$$
\mathcal{L}:=\{u\in PSH(\mathbb{C}^n):
\text{for some constant }c_u,\;
u(z)\leq\max\{0,\log\|z\|\}+c_u,\;z\in\mathbb{C}^n\}.
$$
The regularity of the extremal function of a set gives geometric information about the set itself. For example, $E$ is pluripolar if and only if $L_E^*\equiv\infty$, where \(^*\) denotes the upper semicontinuous regularization, and \(E\) is non-pluripolar if and only if \(L_E^*\in\mathcal{L}\). 

We recall the notions of \(L\)-regularity and local \(L\)-regularity of a subset in \(\mathbb{C}^n\). We denote the open Euclidean ball of center \(a\in\mathbb{C}^n\) and radius \(r\in(0,\infty]\) by \(\mathbb{B}(a,r)\) and its closure by \(\bar{\mathbb{B}}(a,r)\). 
\begin{defn}\label{defn:L-reg}
    Let \(E\) be a subset in $\mathbb{C}^n$ and $a\in \bar{E}$. We say 
    \begin{itemize}
        \item[(i)] $E$ is {$L$-regular at $a$} if $L_E$ is continuous at $a$, or equivalently, \(L_E^*(a)=0\). 
        \item[(ii)] $E$ is {locally $L$-regular at $a$} if $L_{E\cap \bar{\mathbb{B}}(a,r)}$ is continuous at $a$ for each \(r\in(0,\infty]\). 
        \item[(iii)] \(E\) is \(L\)-regular if \(L_E\) is continuous at every point of \(\bar{E}\).
        \item[(iv)] \(E\) is locally \(L\)-regular if it is locally \(L\)-regular at every point of \(\bar{E}\).
    \end{itemize}
\end{defn}
The equivalence in (i) is due to the non-negativity of \(L_E\). 
In (ii), \(L_E\) is locally \(L\)-regular at \(a\) if and only if $L_{E\cap \bar{\mathbb{B}}(a,r)}$ is continuous at $a$ for small \(r>0\). This equivalence follows from the the monotonicity of extremal functions with respect to the set inclusion. By the same reason, \(E\) is locally \(L\)-regular at \(a\) if and only if \(L_{E\cap S}\) is continuous at \(a\) for each neighborhood \(S\subset\mathbb{C}^n\) of \(a\).
About (iii), \(L\)-regularity of a compact set \(F\subset\mathbb{C}^n\) implies the continuity of $L_F$ on $\mathbb{C}^n$ since the extremal function of a compact subset in \(\mathbb{C}^n\) is lower semicontinuous as the supremum of a family of smooth functions (see \cite[Corollary~5.1.3]{Kl91} for the proof) and \(L\)-regularity of \(F\) gives the upper semicontinuity of \(L_F=L_F^*\) from the fact that \(L_F^*=0\) on \(F\). 

Local \(L\)-regularity of a set in \(\mathbb{C}^n\) implies \(L\)-regularity by monotonicity, but the converse is not true. Sadullaev in \cite{Sa16} gives a counterexample \(F_0:=\{z\in\mathbb{C}:|z|=1\}\cup\{0\}\). \(L_{F_0}(z)=L_{\bar{\mathbb{B}}(0,1)}(z)=\log^+|z|\). Thus \(F_0\) is \(L\)-regular, but it is not locally \(L\)-regular at \(0\) as \(L_{F_0\cap \bar{\mathbb{B}}(0,1/2)}(z)=L_{\{0\}}(z)\) is \(0\) at \(z=0\) and \(\infty\) at \(z\neq 0\). However, Cegrell in \cite{Ce82} proved that \(L\)-regularity and local \(L\)-regularity are equivalent for a compact subset in \(\mathbb{R}^n\) as a subset of \(\mathbb{C}^n\).

Local \(L\)-regularity is related to the continuity of {\em weighted} extremal functions.
For a real-valued function $\phi$ on $E$, the weighted extremal function $L_{E,\phi}$ for \((E,\phi)\) is 
$$
L_{E,\phi}(z) := \sup\{u(z): u \in \mathcal{L}, u|_E \leq \phi\},\quad z\in\mathbb{C}^n.
$$
Siciak \cite[Proposition~2.12]{Siciak81} proved that for a compact subset \(F\subset \mathbb{C}^n\) and a bounded lower semicontinuous function \(\psi:F\to \mathbb{R}\), \(L_{F,\psi}\) is also lower semicontinuous as the supremum of a family of smooth functions. Therefore, if \(L_{F,\psi}\) is continuous on \(F\) (continuous at each point in \(F\)), then \(L_{F,\psi}^*|_F=L_{F,\psi}|_F\leq\psi\), \(L_{F,\psi}=L_{F,\psi}^*\) is continuous on \(\mathbb{C}^n\).
Furthermore, \(L_{F,\psi}\) is continuous if \(\psi\) is continuous and \(F\) is locally \(L\)-regular by \cite[Proposition~2.16]{Siciak81}. 

{\em Results.} Throughout this article, $(X, \omega)$ is a 
compact Hermitian manifold of complex dimension $n$. 
A function \(\phi:X\to\mathbb{R}\cup\{-\infty\}\) is called quasi-plurisubharmonic (quasi-psh for short) if \(\phi\not\equiv-\infty\) and \(\phi\) is locally written as the sum of a smooth function and a psh function. 
For the real differential operators \(d:=\partial+\bar{\partial}\) and \(d^c:=i(\bar{\partial}-\partial)\), the family of $\omega$-psh 
functions on \(X\) is defined by 
\[\notag
	PSH(X,\omega) := \{v :X\to \mathbb{R}\cup\{-\infty\}: v
    \text{ is quasi-psh},\,
    \omega + dd^c v \geq 0\}.
\]
We also write \(\omega_v=\omega+dd^c v\) for an \(\omega\)-psh function \(v\). 
If \(\phi:X\to\mathbb{R}\cup\{-\infty\}\) is quasi-psh, then there exists a constant \(c>0\) such that $dd^c\phi+c\omega\geq 0$, since \(X\) is compact and $PSH(X, c_1\omega)\subset PSH(X,c_2\omega)$ for \(0<c_1<c_2<\infty\).

The extremal function \(V_E=V_{\omega;E}\) of a subset $E$ in $X$ is defined by
\[\label{eq:SZ-ext}
	V_E (x)=V_{\omega;E}(x) := \sup\{ v(x): v \in PSH(X,\omega),\; v|_E \leq 0 
    \},\quad x\in X. 
\]
For a weight function \(\phi:E\to\mathbb{R}\), the weighted extremal function of \((E,\phi)\) is
\[\label{eq:w-SZ-ext}
	V_{E,\phi}(x)=V_{\omega;E,\phi} (x) := \sup\{ v(x): v\in PSH(X,\omega), \; v|_E \leq \phi\},\quad x\in X.\]
When the domain of a real-valued function \(\psi\) contains \(E\), we write \(V_{E,\psi|_E}\) as \(V_{E,\psi}\) for convenience.
There is a 
bijection between \(PSH(\mathbb{CP}^n,\omega_{FS})\) and the Lelong class \(\mathcal{L}\) 
(\cite[Chapter~8]{GZ17}). 
When \(\phi\) is a continuous function on a compact subset of \(X\), we can extend $\phi$ to a continuous function on \(X\) with the same supremum norm by Tietze's extension theorem.



\begin{defn}[local \(L\)-regularity in a complex manifold]
\label{defn:loc-L-reg-mfd} 
Let \(Y\) be a complex manifold. Let \(E\) be a subset of \(Y\) and \(a\in\bar{E}\). We say
\begin{itemize}
\item[(i)] \(E\) is locally \(L\)-regular at \(a\) if for 
\textbf{every} \(R\in(0,\infty]\) and \textbf{every} holomorphic coordinate ball \((\Omega,\tau)\) in \(Y\) centered at \(a\) of radius \(R\),
\(\tau(E\cap\tau^{-1}(\bar{\mathbb{B}}(\mathbf{0},R/2)))\) is locally \(L\)-regular at \(\mathbf{0}\),
or equivalently,
\(\tau(E\cap\Omega)\) is locally \(L\)-regular at \(\mathbf{0}\).
Equivalently, we say \(E\) is locally \(L\)-regular at \(a\) if for \textbf{every} holomorphic chart \((O,g)\) in \(Y\) containing \(a\), \(g(E\cap O)\) is locally \(L\)-regular at \(g(a)\).
\item[(ii)] \(E\) is weakly locally \(L\)-regular at \(a\) 
if for \textbf{some} \(R\in(0,\infty]\) and \textbf{some} holomorphic coordinate ball \((\Omega,\tau)\) in \(Y\) centered at \(a\) of radius \(R\),
\(\tau(E\cap\Omega)\) is locally \(L\)-regular at \(\mathbf{0}\).
Equivalently, we say \(E\) is weakly locally \(L\)-regular at \(a\) if for \textbf{some} holomorphic chart \((O,g)\) in \(Y\) containing \(a\), \(g(E\cap O)\) is locally \(L\)-regular at \(g(a)\).
\item[(iii)] \(E\) is locally \(L\)-regular if it is locally \(L\)-regular at every point of \(\bar{E}\).
\item[(iv)] \(E\) is weakly locally \(L\)-regular if it is weakly locally \(L\)-regular at every point of \(\bar{E}\).
\end{itemize}
\end{defn}
In this definition, the first equivalence in (i) is due to the fact that both 
the intersection of \(\tau(E\cap\tau^{-1}(\bar{\mathbb{B}}(\mathbf{0},R/2)))\) with \(\bar{\mathbb{B}}(\mathbf{0},r)\) and the intersection of \(\tau(E\cap\Omega)\) with \(\bar{\mathbb{B}}(\mathbf{0},r)\) are equal to \(\tau(E\cap\tau^{-1}(\bar{\mathbb{B}}(\mathbf{0},r)))\) for each \(r\in(0,R/2]\). 
The reason for the second equivalence in (i) is that, for some \(R_0\in(0,\infty)\), \(\mathbb{B}(g(a),R_0)\subset g(O)\), $$
L_{g(E\cap O)\cap\bar{\mathbb{B}}(g(a),r)}^*(g(a))=L_{g_0(E\cap O_0)\cap\bar{\mathbb{B}}(\mathbf{0},r)}^*(\mathbf{0})=0,\quad r\in (0,R_0/2],$$
for the coordinate ball
\((O_0,g_0):=(g^{-1}(\mathbb{B}(g(a),R_0)),g|_{O_0}-g(a))\)
centered at \(a\). The equivalence in (ii) holds for the same reason as the second equivalence in (i).
\begin{remark}\label{rmk:loc-L-reg-mfd}
In Definition~\ref{defn:loc-L-reg-mfd}, 
let \((\Omega_{\infty},\tau_{\infty})\) be a holomorphic coordinate ball in \(Y\) centered at \(a\) with infinite radius as \(\tau_{\infty}(\Omega_{\infty})=\mathbb{C}^n\). 
Then \((\Omega_2,\tau_2):=(\tau_{\infty}^{-1}(\mathbb{B}(\mathbf{0},2)),\tau_{\infty}|_{\Omega_2})\) is a holomorphic coordinate ball in \(Y\) centered at \(a\) of finite radius, and \(\tau_{\infty}(E\cap\Omega_{\infty})\cap\bar{\mathbb{B}}(\mathbf{0},r)=\tau_2(E\cap\Omega_2)\cap\bar{\mathbb{B}}(\mathbf{0},r)\) for each \(r\in(0,1]\). Therefore, \(E\) is locally \(L\)-regular at \(a\in \bar{E}\) if and only if 
for every holomorphic coordinate ball \((\Omega,\tau)\) in \(Y\) centered at \(a\) of finite radius, \(\tau(E\cap\Omega)\) is locally \(L\)-regular at \(\mathbf{0}\). By the same reason, \(E\) is weakly locally \(L\)-regular at \(a\in\bar{E}\) if and only if for some holomorphic coordinate ball \((\Omega,\tau)\) in \(Y\) centered at \(a\) of finite radius, \(\tau(E\cap\Omega)\) is locally \(L\)-regular at \(\mathbf{0}\).
\end{remark}

We get the following main results.
\begin{thm}\label{thm:characterization-c}  
Let $K$ be a compact subset of a compact Hermitian manifold \(X\) and  $a\in K$. Let \(\bar{B}(a,r)\) be a closed holomorphic coordinate ball in \(X\) with the center \(a\in K\) and the finite radius \(r>0\). Then the following items hold:
\begin{itemize}
\item
[(i)] $V_K$ is continuous at $a$ if and only if  $V_{K \cap \bar{B}(a,r)}$ is continuous at $a$.
\item
[(ii)] If $\phi:X\to\mathbb{R}$ is continuous and $V_{K}$ is continuous, then $V_{K,\phi}$ is continuous.
\end{itemize}
\end{thm}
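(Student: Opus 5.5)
The plan is to reduce both items to upper estimates at the single point $a$. Since $0\in PSH(X,\omega)$, every extremal function $V_E$ is $\geq 0$ and vanishes on $E$. For $a\in K$, continuity of $V_K$ at $a$ should therefore be equivalent to $V_K^*(a)=0$. This needs lower semicontinuity, which is exactly the step I flag as hardest below. For (i), the direction ``$V_{K\cap\bar B(a,r)}$ continuous at $a$ $\Rightarrow$ $V_K$ continuous at $a$'' is monotonicity. Indeed, $K\cap \bar B(a,r)\subset K$ gives $0\le V_K^*\le V_{K\cap\bar B(a,r)}^*$, so $V^*_{K\cap\bar B(a,r)}(a)=0$ forces $V_K^*(a)=0$.

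For the converse in (i), assume $V_K^*(a)=0$ and write $K_r:=K\cap\bar B(a,r)$. First I show $K_r$ is non-pluripolar. If it were pluripolar, I would glue a quasi-psh function equal to $-\infty$ on $K_r$ with a local function. This would produce competitors for $V_K$ that are large at points near $a$, contradicting $V_K^*(a)=0$. Hence $C_0:=\sup_X V_{K_r}^*<\infty$, and every competitor $v$ for $V_{K_r}$ satisfies $v\le C_0$ on $X$.

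Next I build a smooth function $h$ on $X$ as follows. In the coordinate ball, $h$ equals $\eta$-small perturbations of $|z|^2-r^2/4$, suitably cut off, with $h(a)\ge -\eta$ and $h\le -1$ on $X\setminus B(a,r)$. Globally $dd^c h\ge -C\omega$. For $s\in(0,1)$ and $A$ large, I set
\[
w:=(1-s)v+\tfrac{s}{C}\,A' h,
\]
with the constants arranged as follows. First $A'\le 1$, which keeps $\omega+dd^cw\ge0$. Second, $h$ is rescaled so that $w\le 0$ on $K\setminus B(a,r)$; this uses $v\le C_0$ there. Third, on $K_r$ we have $w\le 0$ because $v\le0$ and $h\le 0$ there. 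Balancing the size of $h$ against $C$ is the delicate bookkeeping: I make $h$ very negative off $B(a,r)$ while keeping it nearly $0$ at $a$, and choose $s$ after $C_0$. With this done, $w\le V_K$, hence
\[
V_{K_r}^*(a)\le \frac{V_K^*(a)+s\,O(\eta)}{1-s}.
\]
Letting $\eta\to0$ and then $s\to0$ gives $V_{K_r}^*(a)=0$.

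For (ii), fix $a\in K$ and $\epsilon>0$. Choose $r$ so that $\phi\le\phi(a)+\epsilon$ on $\bar B(a,r)$. If $v$ is a competitor for $V_{K,\phi}$, then $v-\phi(a)-\epsilon\le0$ on $K_r$. Constant shifts preserve $PSH(X,\omega)$, so
\[
v\le \phi(a)+\epsilon+V_{K_r}.
\]
By (i), continuity of $V_K$ at $a$ gives $V_{K_r}^*(a)=0$, so $V_{K,\phi}^*(a)\le\phi(a)+\epsilon$. Hence $V_{K,\phi}^*\le\phi$ on $K$, so $V_{K,\phi}^*$ is itself a competitor and $V_{K,\phi}=V_{K,\phi}^*$ is upper semicontinuous.

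The main obstacle, used in both items, is lower semicontinuity of $V_{K,\phi}$ (and of $V_K$) on a merely Hermitian $X$. There is no Kähler regularization, so I would proceed as follows. Take a competitor $v$ and apply a Demailly-type regularization on compact Hermitian manifolds. This gives smooth $v_j\searrow v$ with $\omega+dd^cv_j\ge-\delta_j\omega$ and $\delta_j\to0$. The rescaled functions $(1+\delta_j)^{-1}v_j$ are then $\omega$-psh. By Dini-type compactness on $K$ and continuity of $\phi$, the functions $(1+\delta_j)^{-1}v_j-\epsilon_j$ with $\epsilon_j\to0$ are continuous competitors. Consequently $V_{K,\phi}$ is a supremum of continuous functions, hence lower semicontinuous. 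Combined with the upper bound above, this yields continuity.
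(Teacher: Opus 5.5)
Your proposal is correct in substance. Item (ii) and the easy direction of (i) are exactly the paper's argument, and the hard direction of (i) is a mild reorganization of it.

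\textbf{Comparison with the paper's route.} Write $K_r:=K\cap\bar B(a,r)$. The paper avoids any a priori bound by testing $\varepsilon_r(v-\chi_r)$ against $V_K$ only for competitors $v\le 1$ of the relative extremal function $h_{K_r}$. This gives $h^*_{K_r}(a)=0$. Non-pluripolarity of $K_r$ then comes for free, since a pluripolar set has $h^*\equiv 1$. The paper concludes with $V^*_{K_r}\le M_{K_r}h^*_{K_r}$ and $M_{K_r}\le A/\mathrm{Cap}_\omega(K_r)<\infty$.

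You instead prove non-pluripolarity first and get $C_0=\sup_X V^*_{K_r}<\infty$ from Proposition~\ref{prop:pluripolarextremal}(i). You then run the cutoff competitor directly on competitors of $V_{K_r}$. This bypasses $h_{K_r}$ and the capacity estimate, at the cost of a separate non-pluripolarity step that you only gesture at. Written out, that step uses the same competitor. If $K_r\subset\{\varphi=-\infty\}$ with $\varphi\in PSH(X,\omega)$ and $\varphi\le 0$ (by \cite{Vu19}), then $t\varphi+\varepsilon_r(1-\chi_r)$ is a competitor for $V_K$ for $0<t\le 1/2$. Letting $t\to 0$ on the dense set $\{\varphi>-\infty\}$ gives $V_K^*(a)\ge\varepsilon_r>0$, a contradiction.

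\textbf{The bookkeeping for $w$ is wrong as written, though the fix is immediate.}
Your $h$ does not have the stated properties: $|z|^2-r^2/4$ equals $-r^2/4$ at $a$ and is positive near $\partial B(a,r)$. Just take $h=-\chi_r$. Then $h(a)=0$, $h=-1$ off $\bar B(a,r)$, and $dd^ch\ge -C\omega$ with $C$ of order $r^{-2}$, so no $\eta$ is needed.

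Rescaling $h$ cannot force $w\le 0$ on $K\setminus\bar B(a,r)$. The term $\frac{s}{C}h$ is unchanged under $h\mapsto\lambda h$, because $C$ scales with $h$. The real constraint is $(1-s)C_0\le s/C$, which forces $s$ close to $1$, so ``then $s\to 0$'' is impossible. It is also unnecessary: for one fixed admissible $s<1$, take the supremum over $v$ and regularize at $a$. This gives $(1-s)V^*_{K_r}(a)\le V_K^*(a)-\frac{s}{C}h(a)=0$.

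\textbf{Two smaller points.} First, lower semicontinuity is not needed for (i). Since $V_K\ge 0=V_K(a)$, $V_K$ is lower semicontinuous at $a\in K$ for free.

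Second, in (ii), Demailly's smooth regularization of a general $\omega$-psh $v$ has a loss governed by Lelong numbers, not a uniform $\delta_j\to 0$. Apply it instead to $\max\{v,-N\}$, which is still a competitor once $N>\sup_X|\phi|$. Alternatively, use the regularization of \cite{BK07}, as the paper does.
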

Theorem~\ref{thm:characterization-c}-(i) is used to prove the following characterizations of the continuity of \(V_K\).
\begin{thm}\label{thm:characterization-c-c}  
Let $K$ be a compact subset of a compact Hermitian manifold \(X\) and  $a\in K$. Then the following items are equivalent. 
\begin{itemize}
    \item[(i)] \(V_K\) continuous at \(a\).
    \item[(ii)] \(K\) is locally \(L\)-regular at \(a\).
    \item[(iii)] \(K\) is weakly locally \(L\)-regular at \(a\).
\end{itemize}
Consequently, the continuity of \(V_K\), local \(L\)-regularity of \(K\) and weak local \(L\)-regularity of \(K\) are equivalent to each other.
\end{thm}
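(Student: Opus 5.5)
The plan is to prove (i)$\Rightarrow$(ii)$\Rightarrow$(iii)$\Rightarrow$(i). The implication (ii)$\Rightarrow$(iii) is immediate from Definition~\ref{defn:loc-L-reg-mfd}, since ``every coordinate ball'' contains ``some coordinate ball''. The real content is a local comparison, near $a$, between the manifold extremal function $V_{K\cap\bar B}$ and the Euclidean function $L_{\tau(K\cap \bar B)}$ for a small closed coordinate ball $\bar B=\tau^{-1}(\bar{\mathbb{B}}(\mathbf{0},r))$. The paper's Theorem~\ref{thm:characterization-c}-(i) handles the passage between $K$ and $K\cap\bar B$, so throughout we may replace $K$ by $K\cap \bar B(a,r)$ for any small $r$.

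\emph{Step (iii)$\Rightarrow$(i).} Fix the coordinate ball $(\Omega,\tau)$ from the hypothesis, with finite radius $R$ by Remark~\ref{rmk:loc-L-reg-mfd}. Take $r<R/4$ and put $E=\tau(K\cap\tau^{-1}(\bar{\mathbb{B}}(\mathbf{0},r)))$, so that $L_E$ is continuous at $\mathbf{0}$.
\begin{itemize}
\item Let $v\in PSH(X,\omega)$ with $v\le 0$ on $K\cap\bar B$. The functions $V_{K\cap \bar B}$ are uniformly bounded above on $X$ (a standard compactness bound, as in Theorem~\ref{thm:characterization-c}), say $v\le M$.
\item On $\Omega$ choose a smooth strictly psh potential $\rho$ with $dd^c\rho\ge\omega$, for instance $\rho=A\|\tau\|^2$, and normalize $|\rho|\le C$ on $\tau^{-1}(\bar{\mathbb{B}}(\mathbf{0},R/2))$. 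Then $u=(v+\rho)\circ\tau^{-1}$ is psh on $\mathbb{B}(\mathbf{0},R/2)$, satisfies $u\le C$ on $E$, and satisfies $u\le M+C$ everywhere.
\item Glue $u$ to a Lelong class function: set $w=\max\{u-C,\ (M+2C)\,\tfrac{\log(\|z\|/r')}{\log(R/(2r'))}\}$ with suitable $r'$, in the usual way. Then $w\le$ (an explicit multiple of) $L_E$ near $\mathbf{0}$. Rather than $L_E$ itself, compare with the relative extremal function of $E$ in $\mathbb{B}(\mathbf{0},R/2)$; its continuity at $\mathbf{0}$ is equivalent to local $L$-regularity by the classical Siciak comparison $L_E\approx$ (relative extremal function)$\cdot\log$ near $E$.
\end{itemize}
This gives $v(x)\le C' h(\tau(x))+\sup_{\bar B}|\rho-\rho(a)|$-type errors, where $h\to0$ as $x\to a$. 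The $\rho$ oscillation is $O(r^2)$ and so can be made small by shrinking $r$ (Theorem~\ref{thm:characterization-c}-(i) lets us shrink). Taking the supremum over $v$ yields $V^*_{K\cap\bar B}(a)=0$, hence continuity at $a$, hence (i).

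\emph{Step (i)$\Rightarrow$(ii).} Take an arbitrary coordinate ball $(\Omega,\tau)$ at $a$ of finite radius and an $r$ smaller than half of it. By Theorem~\ref{thm:characterization-c}-(i), $V_{K\cap\bar B(a,r)}$ is continuous at $a$. Now let $u\in\mathcal{L}$ with $u\le0$ on $E=\tau(K\cap\bar B(a,r))$. Write $u\le\log^+\|z\|+c_u$. The key point is that the Siciak-type bound gives $u\le \log^+(\|z\|/r)+C_0$ on $\bar{\mathbb{B}}(\mathbf{0},r)$, uniformly over such $u$ once $E$ is non-pluripolar; the non-pluripolarity follows from $V_{K\cap\bar B}$ being bounded near $a$. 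Transplant $\epsilon u\circ\tau$ to $X$: on $\tau^{-1}(\mathbb{B}(\mathbf{0},2r))$ it is psh and bounded by $\epsilon C_1$. Glue it, via a max with $\epsilon C_1\cdot(\text{a fixed }\omega\text{-psh function that is }<-\!\epsilon C_1$ near $\partial\tau^{-1}\mathbb{B}(\mathbf{0},2r)$ and large away$)$, or more simply subtract $\epsilon' \|\tau\|^2$-compensated terms so that $\omega+dd^c(\cdot)\ge0$. Choosing $\epsilon$ small makes the curvature defect absorbable by $\omega$. This yields an $\omega$-psh function $\le \epsilon'$ on $K\cap\bar B$, so $\epsilon u\circ\tau\le V_{K\cap\bar B}+\epsilon'$ near $a$. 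Continuity of $V$ at $a$ then gives $L_E^*(\mathbf{0})=0$. Doing this for every $r$ gives local $L$-regularity in every chart, i.e.\ (ii).

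The final sentence of the statement follows pointwise over $a\in K$.

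The main obstacle I expect is the gluing step in both directions, namely converting a local psh function in a chart to a global $\omega$-psh function (and back) with only an $\epsilon$-loss. On a Hermitian, non-K\"ahler $X$ there is no global potential for $\omega$, so the argument must be purely local with a cutoff max-construction. I would first try the standard trick: use $\max\{\epsilon u\circ\tau + A(\|\tau\|^2-(2r)^2),\,-\delta\}$, which equals the constant $-\delta$ near the boundary of the chart and hence extends to $X$, with the scalings $\epsilon,A$ chosen to satisfy $\omega+dd^c\ge0$.
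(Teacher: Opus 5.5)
Your decomposition (i)$\Rightarrow$(ii)$\Rightarrow$(iii)$\Rightarrow$(i) follows the paper's route. So do the use of Theorem~\ref{thm:characterization-c}-(i) to pass to $K\cap\bar B$ and the local max-gluing between $V_{K\cap\bar B}$ and $L_E$, $E=\tau(K\cap\bar B)$; the paper packages that gluing in Lemma~\ref{lem:equivalence-notions}, then applies \eqref{eq:re-ext-compare-c} and \eqref{eq:compare-C}. However, the one explicit gluing you commit to in (i)$\Rightarrow$(ii) fails. In $\max\{\varepsilon u\circ\tau+A(\|\tau\|^2-(2r)^2),-\delta\}$ the quadratic term vanishes on $\partial\tau^{-1}(\mathbb{B}(\mathbf{0},2r))$. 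So the max equals $-\delta$ near that boundary only if $\varepsilon u\circ\tau<-\delta$ there, which is false already for the competitor $u=\log^+(\|z\|/r)$. The failure is structural: your local function is psh, so a successful gluing with a constant would give a psh function on the compact manifold $X$. That function would be identically $-\delta$, so the construction carries no information. In your first variant, likewise, the global function must dominate the local one near the boundary, not lie below $-\varepsilon C_1$ there. The quadratic term must be \emph{subtracted}, and this is where $\omega$ is genuinely needed. Put $\phi=\varepsilon\,u\circ\tau-A\|\tau\|^2$ on $W=\tau^{-1}(\mathbb{B}(\mathbf{0},2r))$, with $A\,dd^c\|\tau\|^2\le\omega$ on $W$, so $\omega+dd^c\phi\ge0$. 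Choose $\varepsilon\sup_{\mathbb{B}(\mathbf{0},2r)}L_E^*<4Ar^2$; the supremum is finite because $E$ is non-pluripolar. Then $\max\{\phi,0\}$ on $W$, extended by $0$, lies in $PSH(X,\omega)$ by Proposition~\ref{prop:envelope}-(i) and is $\le 0$ on $K\cap\bar B$. Hence $\varepsilon L_E\circ\tau\le V_{K\cap\bar B}+A\|\tau\|^2$ on $W$, and $L_E^*(\mathbf{0})=0$ follows. This is the second inequality of Lemma~\ref{lem:equivalence-notions} with $\rho_1=A\|\tau\|^2$. Bounding $L_E^*$ on $\mathbb{B}(\mathbf{0},2r)$ directly simply merges the paper's detour through $u_E$ and \eqref{eq:compare-C}.

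In (iii)$\Rightarrow$(i) the mechanism is right but the normalization is off. Subtracting a constant $C\ge\sup|\rho|$ over the whole half-ball only yields $V^*_{K\cap\bar B}(a)\le C$, which does not shrink with $r$. Subtract $\sup_E\hat\rho\le Ar^2$ instead; your log-gluing still works, since $u-\sup_E\hat\rho\le M+C$. Then monotonicity alone gives $V_K^*(a)\le V^*_{K\cap\bar B}(a)\le Ar^2$, and letting $r\to0$ finishes. This is exactly the role of the term $\sup\rho_2(K\cap\tau_0^{-1}(\bar{\mathbb{B}}(\mathbf{0},r)))$ in the paper. You should also justify that $M=\sup_X V_{K\cap\bar B}$ is finite. $L_E^*(\mathbf{0})=0$ makes $E$, hence $K\cap\bar B$, non-pluripolar, and then Proposition~\ref{prop:pluripolarextremal} applies. $M$, and with it your factor $\kappa$, may blow up as $r\to0$. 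This is harmless only because, for each fixed $r$, the hypothesis gives $L_E^*(\mathbf{0})=0$ exactly, which kills that factor.
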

Theorem~\ref{thm:characterization-c}-(i) is a different phenomenon from that of the extremal functions on \(\mathbb{C}^n\), since there are \(L\)-regular compact subsets of \(\mathbb{C}^n\) which are not locally \(L\)-regular. One such example was explained during the explanation of \(L\)-regularity. 
Theorem~\ref{thm:characterization-c}-(i) is also used in the proof of Theorem~\ref{thm:characterization-c}-(ii). 
Theorem~\ref{thm:characterization-c}-(ii) uses Siciak's proof of \cite[Proposition~2.16]{Siciak81} to get the continuity of weighted extremal functions of compact subsets in \(\mathbb{C}^n\) when the sets are locally \(L\)-regular and the weights are continuous. 
N.~C. Nguyen in \cite{Ng24} proved Theorem~\ref{thm:characterization-c} 
for compact K\"ahler manifolds, and we generalize his results to compact Hermitian manifolds. 
Theorem~\ref{thm:characterization-c-c} is indeed a new characterization of the continuity of extremal functions of compact subsets in compact Hermitian manifolds using the new definitions (local \(L\)-regularity and weak local \(L\)-regularity) of a subset in a complex manifold.

\begin{remark}\label{rmk:examples of locLreg}
There are several locally \(L\)-regular compact subsets in \(\mathbb{C}^n\). Closed Euclidean balls with finite radii are locally \(L\)-regular by \cite[Example~5.1.1]{Kl91}. The image of a locally \(L\)-regular compact subset \(F\) in \(\mathbb{C}^n\) by an invertible affine automorphism \(\Phi\) of \(\mathbb{C}^n\) is also locally \(L\)-regular by the relation 
$$L_{\Phi(F)}=L_{F}\circ\Phi^{-1}.$$
Accordingly, the real cube \([0,1]^n\) is locally \(L\)-regular by \cite[Corollary~5.4.5]{Kl91}. 
For other examples, see \cite[Example~4.1, Example~4.2]{Ng24}.
\end{remark}

The \(L\)-regularity at a point in a general compact subset in \(\mathbb{C}^n\) does not imply the local \(L\)-regularity at the point. However, if the point is a star center of the compact subset, then we prove that both regularities are equivalent at the point. In particular, convex \(L\)-regular compact subset in \(\mathbb{C}^n\) is locally \(L\)-regular. 
\begin{thm}\label{thm:L-reg at starcenter implies loc L-reg}
Let \(F\) be a compact subset of \(\mathbb{C}^n\). If \(F\) is \(L\)-regular at \(b\in F\) and \(b\) is a star center of \(F\), i.e., \(tF+(1-t)b\subset F\) for each \(t\in[0,1]\),  then \(F\) is locally \(L\)-regular at \(b\). Consequently, if \(F\) is \(L\)-regular and convex, then \(F\) is locally \(L\)-regular. 
\end{thm}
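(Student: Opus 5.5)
Translate so that \(b=\mathbf{0}\); this is harmless because affine maps preserve extremal functions via \(L_{\Phi(F)}=L_F\circ\Phi^{-1}\). The star condition then reads \(tF\subset F\) for \(t\in[0,1]\). Fix \(r>0\) and put \(F_r:=F\cap\bar{\mathbb{B}}(\mathbf{0},r)\). We must show \(L_{F_r}^*(\mathbf{0})=0\). Let \(R\) be such that \(F\subset\bar{\mathbb{B}}(\mathbf{0},R)\), and set \(t:=\min\{1,r/R\}\). Then \(tF\subset F\cap\bar{\mathbb{B}}(\mathbf{0},r)=F_r\). By monotonicity, \(L_{F_r}\le L_{tF}\). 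The dilation \(z\mapsto z/t\) is an affine automorphism, so
$$L_{tF}(z)=L_F(z/t).$$
Therefore
$$L_{F_r}^*(z)\le L_{tF}^*(z)=L_F^*(z/t).$$
At \(z=\mathbf{0}\) this gives \(L_{F_r}^*(\mathbf{0})\le L_F^*(\mathbf{0})=0\), since \(F\) is \(L\)-regular at \(\mathbf{0}\). Since \(r\) was arbitrary, \(F\) is locally \(L\)-regular at \(b\).

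A few details need checking. First, \(L_{tF}=L_F\circ(\cdot/t)\): composition with a linear automorphism maps \(\mathcal{L}\) bijectively onto itself, because \(\log\|z/t\|=\log\|z\|-\log t\) changes \(c_u\) only by a constant. Second, the usc regularization commutes with composition by a homeomorphism. Third, \(L_{F_r}\ge0\) on \(F_r\), so the inequality forces \(L_{F_r}^*(\mathbf{0})=0\); since \(\mathbf{0}\in F_r\), this is equivalent to continuity of \(L_{F_r}\) at \(\mathbf{0}\) by the remark after Definition~\ref{defn:L-reg}. The main point is simply the observation that a star-shaped set contains a shrunken copy of itself inside every small ball around the center, so no step is a real obstacle.

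For the consequence, let \(F\) be convex and \(L\)-regular. Every point \(b\in F\) is a star center, since \(tx+(1-t)b\in F\) for \(x\in F\) by convexity. \(L\)-regularity gives continuity of \(L_F\) at every \(b\in F=\bar F\). The first part then yields local \(L\)-regularity at every \(b\in F\), which is exactly local \(L\)-regularity of \(F\).
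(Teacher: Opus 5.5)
Your proof is correct and is essentially the paper's argument. The paper rescales \(F\cap\bar{\mathbb{B}}(b,r)\) by the factor \(1/(r\epsilon)\) and uses that the enlarged copy of \(F\) contains \(F\). You instead observe that the shrunken copy \(tF\) lies inside \(F\cap\bar{\mathbb{B}}(\mathbf{0},r)\). Both routes then conclude, via monotonicity and the affine invariance \(L_{\Phi(F)}=L_F\circ\Phi^{-1}\), that \(L^*_{F\cap\bar{\mathbb{B}}(b,r)}(b)\le L^*_F(b)=0\).
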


\bigskip
{\em Organization.} In Section~\ref{sec:c}, we give basic properties of extremal functions on a compact Hermitian manifold \(X\) and prove Theorem~\ref{thm:characterization-c}, Theorem~\ref{thm:characterization-c-c} and Theorem~\ref{thm:L-reg at starcenter implies loc L-reg}. 
Section~\ref{sec:appendix} is an appendix including the lemma explaining the relation between extremal functions on \(X\) and locally defined weighted relative extremal functions in \(\mathbb{C}^n\), which is used for the proof of Theorem~\ref{thm:characterization-c-c}. 

\bigskip
{\em Acknowledgement.} I would like to thank my advisor Ngoc Cuong Nguyen with my deep gratitude for helping me write this article. This work was supported by the National Research Foundation of Korea (NRF) funded by the Korean government (MSIT) RS-2026-25470686.

\section{Continuity of extremal functions}\label{sec:c}
Throughout this article, \((X,\omega)\) is a compact Hermitian manifold of complex dimension \(n\). We impose no additional assumption on the Hermitian metric \(\omega\).

\subsection{Basic properties of extremal functions on a compact Hermitian manifold} The basic properties introduced in this subsection are the analogues of the extremal functions on \(\mathbb{C}^n\).
Following Guedj and Zeriahi \cite[Definition~9.20]{GZ17}, we define the Alexander-Taylor capacity for subsets of \(X\). Note that for subsets \(E\) and \(O\) of \(X\) (resp. of \(\mathbb{C}^n\)), if \(O\) is open in \(X\) (resp. in \(\mathbb{C}^n\)), then \(\sup_O V_E=\sup_O V_E^*\) (resp. \(\sup_O L_E=\sup_O L_E^*\)). 
\begin{defn}
Let \(E\) be a subset of \(X\). Alexander-Taylor capacity \(T_{\omega}(E)\) of \(E\) is defined by \(T_{\omega}(E):=\exp(-\sup_X V_E)\).
\end{defn}
\begin{prop}\label{prop:ATcap}
Let \(E\) be a subset of \(X\). Then \(T_{\omega}(E)\) is equal to
$$T_{\omega}'(E):=\inf\{e^{\sup_{E}\psi}:\psi\in PSH(X,\omega),\,\sup_X \psi=0\}.$$
\end{prop}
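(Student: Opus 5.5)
We must show $\exp(-\sup_X V_E)=T'_\omega(E)$, i.e. $\sup_X V_E = -\log T'_\omega(E) = \sup\{-\sup_E\psi : \psi\in PSH(X,\omega),\ \sup_X\psi=0\}$. The plan is to prove the two inequalities separately, working with $\log$ of both sides and allowing the values $+\infty$ (equivalently $T=0$). Throughout we use two facts: constants are $\omega$-psh since $\omega>0$, and adding a constant to an $\omega$-psh function keeps it $\omega$-psh.

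\textbf{Step 1: $\sup_X V_E\ge -\log T'_\omega(E)$.} Take $\psi\in PSH(X,\omega)$ with $\sup_X\psi=0$.
If $\sup_E\psi=-\infty$, then $\psi+c$ is an admissible competitor in the definition of $V_E$ for every $c\in\mathbb{R}$. Hence $\sup_X V_E\ge \sup_X\psi+c=c$ for all $c$, so $\sup_X V_E=\infty$.
Otherwise, set $v:=\psi-\sup_E\psi$. Then $v\in PSH(X,\omega)$ and $v|_E\le 0$, so $v\le V_E$. Therefore $\sup_X V_E\ge \sup_X v = -\sup_E\psi$.
Taking the supremum over all such $\psi$ gives $\sup_X V_E\ge -\inf\sup_E\psi$, which is exactly $T_\omega(E)\le T'_\omega(E)$.

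\textbf{Step 2: $\sup_X V_E\le -\log T'_\omega(E)$.} Take any competitor $v\in PSH(X,\omega)$ with $v|_E\le 0$. By compactness of $X$ and upper semicontinuity, $M:=\sup_X v$ is finite, since a quasi-psh function is bounded above. Set $\psi:=v-M$. Then $\psi\in PSH(X,\omega)$ with $\sup_X\psi=0$, and $\sup_E\psi\le -M$. Hence $T'_\omega(E)\le e^{-M}$, i.e. $\sup_X v\le -\log T'_\omega(E)$. Taking the supremum over all competitors $v$ gives $\sup_X V_E\le -\log T'_\omega(E)$.

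Two boundary cases need separate treatment. If $E=\emptyset$, both sides equal $0$ under the conventions $\sup_\emptyset=-\infty$ and $e^{-\infty}=0$. If $V_E\equiv\infty$ somewhere, the inequality in Step 2 is still meaningful in the extended reals, because $\sup_X V_E$ is a supremum over competitors and that supremum commutes with the pointwise supremum defining $V_E$.

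\textbf{Main obstacle.} The only genuinely nonformal point is Step 2's use of $\sup_X v<\infty$ for each competitor, which rests on quasi-psh functions being upper semicontinuous on a compact manifold. The rest is bookkeeping of the degenerate cases $\sup_E\psi=-\infty$ and $\sup_X V_E=+\infty$, which the two steps handle.
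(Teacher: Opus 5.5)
Your proof is correct and follows essentially the same two-step argument as the paper: subtract $\sup_E\psi$ from a normalized $\psi$ to get a competitor for $V_E$, and conversely subtract $\sup_X v$ from a competitor $v$ to get an admissible $\psi$. You also treat explicitly two points the paper leaves implicit: the degenerate case $\sup_E\psi=-\infty$, and the finiteness of $\sup_X v$.
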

\begin{proof}
Suppose \(\psi\in PSH(X,\omega)\) and \(\sup_X\psi=0\). Then 
\(\psi-sup_E \psi\leq 0\) on \(E\),
$$\sup_X(\psi-\sup_E \psi)=-\sup_E \psi
\leq \sup_X V_E,\quad \sup_E \psi\geq -\sup_X V_E.$$ 
Accordingly, 
\(T_{\omega}(E)\leq T'_{\omega}(E)\).

Conversely, 
for \(v\in PSH(X,\omega)\) with \(v|_E\leq0\),
we have \(\sup_X(v-\sup_X v)=0\),
$$\sup_E v-\sup_X v
=\sup_E(v-\sup_X v)
\geq \inf_{\psi\in PSH(X,\omega),\sup_X\psi=0} \sup_E \psi=\log{T_{\omega}'(E)}.$$
It follows that 
$$0\geq\sup_{v\in PSH(X,\omega),v|_E\leq 0}\sup_{E}v
\geq \log{T_{\omega}'(E)}+\sup_{v\in PSH(X,\omega),v|_E\leq 0}\sup_X v.$$
Since \(\sup_v\sup_X v\) is equal to \(\sup_X V_E\),
we get the converse inequality.
\end{proof}

The following proposition by Vu \cite[Lemma~2.2, 2.7]{Vu19}
gives \(\omega\)-psh functions. For an open subset \(U\) of \(X\), $PSH(U,\omega)$ is defined by $$PSH(U,\omega):=\{v:U\to\mathbb{R}\cup\{-\infty\}:v\text{ is quasi-psh},\,\omega+dd^cv\geq0\}.$$
\begin{prop}\label{prop:envelope}
(i) If \(U_1, U_2\subset X\) are open with \(\overline{U_1}\subset U_2\), \(v_1\in PSH(U_1,\omega)\) and \(v_2\in PSH(U_2,\omega)\) with \(\limsup_{U_1\ni y\to x}v_1(y)\leq v_2(x)\) for each \(x\in\partial U_1\), then \(v:U_2\to[-\infty,\infty)\)
defined as \(\max\{v_1,v_2\}\) on \(U_1\) and \(v_2\) on \(U_2\setminus U_1\) is in \(PSH(U_2,\omega)\).\\
(ii) If \(\{v_{\beta}\}_{\beta\in I}\subset PSH(X,\omega)\) are  uniformly bounded above, then \((\sup_{\beta\in I} v_{\beta})^*\in PSH(X,\omega)\).
\end{prop}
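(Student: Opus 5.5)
The statement consists of two local assertions about $\omega$-psh functions, so I will reduce both to the corresponding classical facts for genuine psh functions in coordinate balls. Since $\omega$ need not be closed, it has no local potential. Instead I will use a frozen-coefficient approximation.

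\emph{Approximation step.} Fix a point $x\in X$, a holomorphic coordinate ball around $x$ and $\varepsilon>0$. Let $\beta:=dd^c|z|^2$ in these coordinates. Let $\omega_x$ be the constant-coefficient $(1,1)$-form equal to $\omega(x)$. Then $\omega_x=dd^c q_x$ for a real quadratic polynomial $q_x$. By continuity of the coefficients of $\omega$ there is $\delta=\delta(x,\varepsilon)>0$ such that $-\varepsilon\beta\le \omega-\omega_x\le\varepsilon\beta$ on $\mathbb{B}(x,\delta)$. Consequently, on $\mathbb{B}(x,\delta)$:
\begin{itemize}
\item if $\omega+dd^c v\ge0$, then $v+q_x+\varepsilon|z|^2$ is psh;
\item conversely, if $v+q_x+\varepsilon|z|^2$ is psh, then $\omega+dd^c v\ge \omega-\omega_x-\varepsilon\beta\ge-2\varepsilon\beta$.
\end{itemize}
Positivity of a $(1,1)$-current is local. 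Covering $X$ by such balls (with a partition of unity when testing against positive forms), a function $v$ for which the second bullet holds around every point satisfies $\omega+dd^c v\ge -2\varepsilon\beta'$ globally, for a fixed Hermitian form $\beta'$ dominating the local $\beta$'s (up to a constant). Letting $\varepsilon\to0$ gives $\omega+dd^cv\ge0$. Quasi-plurisubharmonicity also follows, since $v=(v+q_x+\varepsilon|z|^2)-(q_x+\varepsilon|z|^2)$ is psh plus smooth near each point.

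\emph{Proof of (i).} Fix $x\in U_2$ and work on $\mathbb{B}(x,\delta)\subset U_2$. Put $w_i:=v_i+q_x+\varepsilon|z|^2$ on the respective domains intersected with the ball; these are psh. The boundary hypothesis $\limsup_{U_1\ni y\to \xi} v_1(y)\le v_2(\xi)$ for $\xi\in\partial U_1$ is unchanged by adding the continuous function $q_x+\varepsilon|z|^2$. The classical gluing lemma for psh functions then shows that the function equal to $\max\{w_1,w_2\}$ on $U_1$ and to $w_2$ off $U_1$ is psh on the ball. This function is exactly $v+q_x+\varepsilon|z|^2$, so the approximation step applies. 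Moreover $v\not\equiv-\infty$ on any component, since $v\ge v_2$.

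\emph{Proof of (ii).} Let $u:=\sup_\beta v_\beta$. On $\mathbb{B}(x,\delta)$ the functions $v_\beta+q_x+\varepsilon|z|^2$ are psh and locally uniformly bounded above. By the classical Lelong/Choquet result, the upper semicontinuous regularization of their supremum is psh. That regularization equals $u^*+q_x+\varepsilon|z|^2$, because adding a continuous function commutes with taking the usc regularization. The approximation step then gives $u^*\in PSH(X,\omega)$. Finally $u^*\ge v_\beta\not\equiv-\infty$ for any member of the (implicitly nonempty) family, so $u^*\not\equiv-\infty$.

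\emph{Main obstacle.} The only real difficulty is the non-closedness of $\omega$, which prevents writing $\omega=dd^c\rho$ and directly quoting the psh results. This is exactly what the $\varepsilon$-frozen approximation is designed to handle. The one point needing care is that the local error $-2\varepsilon\beta$ is uniform over a finite cover of the compact $X$, so that the limit $\varepsilon\to0$ is legitimate.
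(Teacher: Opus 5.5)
Your proof is correct, but it takes a different route from the paper's. The paper does not reduce to classical psh functions. It derives (i) and (ii), which are Vu's Lemmas~2.2 and~2.7, from Vu's Lemma~2.1. That lemma characterizes $\eta$-psh functions, for a continuous real $(1,1)$-form $\eta$ on an open set of $\mathbb{C}^n$, as the upper semicontinuous functions satisfying a sub-mean value inequality on complex discs, corrected by the term $\frac{1}{2\pi}\int_0^{\epsilon}\frac{dt}{t}\int_{|s|\le t}\eta(x+sv)$. This correction depends only on $\eta$, not on the function, so the classical arguments for maxima and regularized suprema carry over with the extra term carried along. No limiting procedure is needed.

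Your frozen-coefficient argument instead writes $\omega=dd^cq_x+O(\varepsilon)$ near $x$. You then apply the textbook gluing lemma and Lelong's theorem to $v_i+q_x+\varepsilon|z|^2$, and remove the $-2\varepsilon\beta$ error because the cone of positive currents is weakly closed. Your route is self-contained: it uses only standard psh facts and the observation that constant-coefficient forms are $dd^c$-exact. Like Vu's, it needs only continuity of $\omega$. The price is the $\varepsilon$-bookkeeping. The paper's route hides the real work inside the proof of the integral criterion, but in exchange gets a reusable pointwise characterization of $\eta$-psh functions.

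One tidy-up: you do not need to cover all of $X$, and in (i) the domain is $U_2$ anyway. Since positivity is local, fix a single chart around a point and a compact neighborhood inside it. Uniform continuity of the coefficients of $\omega$ there gives a $\delta(\varepsilon)$ independent of the center. Hence $\omega+dd^cv\ge-2\varepsilon\beta$ holds, with one fixed $\beta$, on a fixed neighborhood for every $\varepsilon>0$, and letting $\varepsilon\to0$ is then immediate.
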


\begin{proof}
These are proved using \cite[Lemma~2.1]{Vu19}, which is the characterization of an \(\eta\)-psh function \(\Phi\) on an open set \(W\) of \(\mathbb{C}^n\) for a continuous real \((1,1)\)-form \(\eta\) on \(W\), as an 
upper semicontinuous function satisfying the following integral inequality
$$
\Phi(x)\leq\frac{1}{2\pi}\int_0^{2\pi}\Phi(x+\epsilon e^{i\theta}v)d\theta+\frac{1}{2\pi}\int_0^{\epsilon}\frac{dt}{t}\int_{\{s\in\mathbb{C}:|s|\leq t\}}\eta(x+sv)$$
for each \((x,\epsilon,v)\in W\times(0,\infty)\times(\mathbb{C}^n\setminus\{{\mathbf{0}}\})\) of \(\bar{\mathbb{B}}(x,\epsilon\|v\|)\subset W\). 
\end{proof}

Pluripolar sets and \(\omega\)-pluripolar sets are also defined on \(X\). 
\begin{defn}
Let \(E\) be a subset of \(X\). \(E\) is called pluripolar if for each \(p\in E\), there exists open \(U\ni p\) and \(\phi\in PSH(U)\) such that \(\phi\not\equiv-\infty\) and \(E\cap U\subset \{\phi=-\infty\}\). \(E\) is called \(\omega\)-pluripolar if \(E\subset\{v=-\infty\}\) for some \(v\in PSH(X,\omega)\).
\end{defn}

We have the following equivalence due to Vu in \cite[Theorem~1.1]{Vu19}.
\begin{prop}\label{lppgpp}
\(E\subset X\) is pluripolar if and only if it is \(\omega\)-pluripolar.
\end{prop}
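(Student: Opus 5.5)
The implication from $\omega$-pluripolar to pluripolar is local and elementary. I would prove it first and then spend most of the effort on the converse.

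\emph{Easy direction.} Suppose $E\subset\{v=-\infty\}$ with $v\in PSH(X,\omega)$ and let $p\in E$. Take a coordinate ball $U\ni p$ small enough that $\omega\le dd^c(A\|z\|^2)$ on $U$ for some constant $A>0$; this is possible because $\omega$ has continuous coefficients. Then $\phi:=v+A\|z\|^2$ is psh on $U$, satisfies $\phi\not\equiv-\infty$ (a quasi-psh function is locally integrable), and $E\cap U\subset\{\phi=-\infty\}$. Hence $E$ is pluripolar.

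\emph{Hard direction, reduction.} Let $E$ be pluripolar. Cover $X$ by finitely many coordinate balls $B_j\Subset B'_j$. Each $E\cap B_j$ is locally pluripolar in $\mathbb{C}^n$, so by Josefson's theorem it lies in $\{u_j=-\infty\}$ for some $u_j\not\equiv-\infty$, psh on $B'_j$, with $u_j\le 0$. Since a finite sum of functions in $PSH(X,\omega)$ lies in $PSH(X,m\omega)$, and rescaling by $1/m$ returns to $PSH(X,\omega)$, it suffices to treat a single piece $F:=E\cap B_j$.

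\emph{Hard direction, construction.} The plan is to produce, for every $N\ge1$, a function $g_N\in PSH(X,\omega)$ with
\[
\sup_X g_N\le 0,\qquad g_N\le -N \text{ on } F,\qquad g_N(x_0)\ge -C,
\]
where $x_0\in B_j$ is a fixed point with $u_j(x_0)>-\infty$ and $C$ does not depend on $N$. Then I would set
\[
g:=\sum_N 2^{-N}g_{4^N}.
\]
The partial sums belong to $PSH(X,(\sum 2^{-N})\omega)\subset PSH(X,\omega)$ and decrease. The limit satisfies $g(x_0)\ge -C>-\infty$, so $g\not\equiv-\infty$ and $g\in PSH(X,\omega)$. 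On $F$ each term is at most $-2^{N}$, so $g\equiv-\infty$ on $F$.

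\emph{Obtaining $g_N$.} This amounts to a capacity comparison: the local smallness of $F$ (it lies in $\{u_j<-K\}$ for every $K$) should force smallness of the global Alexander--Taylor quantity $T_\omega$, and Proposition~\ref{prop:ATcap} is the tool for turning that smallness into a normalized $\omega$-psh function. I would build $g_N$ by gluing with Proposition~\ref{prop:envelope}(i).
\begin{itemize}
\item Inside $U_1=B_j$, take $v_1=\lambda\max(u_j,-K)+c(r_1^2-\|z\|^2)$. Any positive multiple of the psh part is allowed. The term $-c\|z\|^2$ keeps $v_1$ $\omega$-psh once $c$ is small, because $\omega$ is strictly positive.
\item Choose $K\ge \lambda|u_j(x_0)|$. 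This keeps $v_1(x_0)\ge -1+c(r_1^2-\|x_0\|^2)$, independently of $\lambda$.
\item Near $\partial U_1$ we have $v_1<0$, which allows gluing to an exterior function $v_2$.
\end{itemize}

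\emph{Main obstacle.} The gluing produces $\max\{v_1,v_2\}$ on $U_1$, so the result on $F$ is never below $v_2$. A constant $v_2$ is therefore useless. I would take $v_2$ to be a global $\omega$-psh function with a logarithmic pole, namely $v_2=\delta\chi\log\|z-a\|$ up to a smooth correction, where $a$ is a point near which $F$ accumulates. The task is then to balance $\lambda$, $K$, $\delta$ so that
\begin{itemize}
\item $v_1\le v_2$ near $\partial U_1$,
\item $g_N\le -N$ on $F$,
\item $g_N(x_0)$ stays bounded below.
\end{itemize}
If this balancing fails, I would instead follow Vu's route. That route compares the relative capacity of compact subsets of $B_j$ with $T_\omega$ through Proposition~\ref{prop:envelope}(ii) and the upper envelope $V_F^*$, showing $\sup_X V_F=+\infty$ for pluripolar compact $F$. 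It then uses a Borel--Cantelli-type summation of the normalized functions $V_{F_k}^*-\sup_X V_{F_k}$ over compact pieces $F_k$ to handle general $F$.
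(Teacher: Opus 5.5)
Your easy direction, the reduction to a single piece $F=E\cap B_j$ via Josefson's theorem in $\mathbb{C}^n$, and the summation $g=\sum_N 2^{-N}g_{4^N}$ are all fine. The paper itself gives no argument here; it just quotes Vu's Theorem~1.1.

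The gap is the existence of the $g_N$, and this is the entire content of the hard direction. If such $g_N$ exist, then $g_N+N$ is a competitor for $V_F$, so $V_F(x_0)\ge N-C$ and $\sup_X V_F=+\infty$. Your gluing cannot produce them, because it is circular:
\begin{itemize}
\item On $U_1$ the glued function is $\max\{v_1,v_2\}\ge v_2$. So $v_2$ must already be a global $\omega$-psh function with $v_2\le 0$ and $v_2\le -N$ on $F$.
\item Take $\rho$ smooth near $\bar U_1$ with $dd^c\rho\ge\omega$. Then $v_1+\rho$ is psh on $U_1$ with boundary values at most $v_2+\rho$. The maximum principle gives $v_1(x_0)\le \sup_X v_2+\sup_{U_1}\rho-\inf_{U_1}\rho$.
\item Combined with $g_N(x_0)\ge -C$, this forces $\sup_X v_2\ge -C'$.
\end{itemize}
So $v_2$ alone already has every property you want from $g_N$, and $v_1$ contributes nothing.

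The log-pole choice does not supply such a $v_2$. The function $\delta\chi\log\|z-a\|$ is $\le -N$ only on a ball of radius about $e^{-N/\delta}$ around $a$, with $\delta$ bounded by $\omega$-psh-ness. A general pluripolar $F$, such as a piece of a complex hyperplane or a dense countable subset of $B_j$, lies in no such ball. Separately, the lower bound on $v_1(x_0)$ needs $\lambda|u_j(x_0)|\le 1$, with $K\approx N/\lambda$; the condition $K\ge\lambda|u_j(x_0)|$ does not give it.

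Your fallback names the right target, $\sup_X V_F=+\infty$, but not how to reach it, and two of its steps fail as written.
\begin{itemize}
\item For a pluripolar compact $F_k$ you yourself expect $\sup_X V_{F_k}=+\infty$, so $V_{F_k}^*-\sup_X V_{F_k}$ is meaningless. You must normalize competitors instead: take $v_k-\sup_X v_k$ with $v_k|_F\le 0$ and $\sup_X v_k\ge 4^k$. Then replace your pointwise bound at $x_0$ by the $L^1$-boundedness of $\{v\in PSH(X,\omega):\sup_X v=0\}$, as the paper does for countable unions.
\item Pluripolar sets need not be $\sigma$-compact, so compact pieces cannot exhaust $F$. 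For example, with $u=\sum_k 2^{-k}\log\|z-q_k\|$ and $\{q_k\}$ dense, $\{u=-\infty\}$ is a dense $G_\delta$ with empty interior, hence not $F_\sigma$ by Baire.
\end{itemize}

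The missing idea is the capacity argument through open neighborhoods:
\begin{itemize}
\item $F\subset O_k:=\{u_j<-k\}\cap B_j$, whose relative capacity is $O(1/k)$ by the Chern--Levine--Nirenberg inequality. The inequalities of Lemma~\ref{lem:BTcap_globalandlocal} then give $Cap_\omega(O_k)\to 0$.
\item Apply the first inequality in \eqref{eq:cap-comparison} to compact sets exhausting $O_k$, and combine it with the non-collapsing of $\omega$. This is the Hermitian substitute for the constancy of $\int_X(\omega+dd^cu)^n$ in the K\"ahler case; argue by contradiction on the increasing limit of the $V_{O_k}$. This forces $\sup_X V_{O_k}\to+\infty$.
\item Since $V_F\ge V_{O_k}$, it follows that $\sup_X V_F=+\infty$.
\end{itemize}
Your proposal only gestures at this step, so the hard direction remains unproved.
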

Accordingly, a countable union of pluripolar set is pluripolar. The reason is the following. If \(E_j\subset\{\varphi_j=-\infty\}\) for some \(\varphi_j\in PSH(X,\omega)\) with \(\sup_X \varphi_j=0\), \(\varphi:=\sum_{j=1}^{\infty}\frac{\varphi_j}{2j^2}\) is the pointwise limit of decreasing sequence of upper semicontinuous functions \((\sum_{j=1}^{k}\frac{\varphi_j}{2j^2})_{k\in\mathbb{N}}\)
so \(\varphi\) is upper semicontinuous. \(\varphi\) is also the 
\(L^1\) limit of \(\sum_{j=1}^{k}\frac{\varphi_j}{2j^2}\in PSH(X,\omega)\) as \(\sum_{j=1}^{\infty}\frac{1}{2j^2}=\frac{\pi^2}{12}\leq1\) and \(\{\varphi_j\}_{j\in\mathbb{N}}\) is bounded in \(L^1(X,\omega^n)\) as it is a well-known fact that \(\{v\in PSH(X,\omega):\sup_Xv=0\}\) is bounded in \(L^1(X,\omega^n)\), 
so \(\varphi\in PSH(X,\omega)\) and \(\cup_j E_j\subset\{\varphi=-\infty\}\).

Characterization of pluripolar sets using their extremal functions is possible. 
\begin{prop}\label{prop:pluripolarextremal}
Let \(E\) be a subset of \(X\). 
\begin{itemize}
\item[(i)] E is pluripolar \(\Leftrightarrow\) \(\sup_X V_E^*=\infty\) \(\Leftrightarrow\) \(V_E^*\equiv\infty\).
\item[(ii)] If E is not pluripolar, then \(V_E^*\in PSH(X,\omega)\) and \(V_E^*|_{int(E)}\equiv0\).
\end{itemize}
\end{prop}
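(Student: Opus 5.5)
I will prove (i) as a cycle of implications, using Proposition~\ref{prop:ATcap}, Proposition~\ref{lppgpp} and Proposition~\ref{prop:envelope}(ii).

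\emph{Pluripolar implies $\sup_X V_E^*=\infty$.} If $E$ is pluripolar, Proposition~\ref{lppgpp} gives $v\in PSH(X,\omega)$ with $E\subset\{v=-\infty\}$, and after adding a constant we may assume $\sup_X v=0$. Then $v+c\le 0$ on $E$ for every $c\in\mathbb{R}$, so $V_E\ge v+c$ on $X$. Taking the supremum over $X$ gives $\sup_X V_E\ge c$ for every $c$. Hence $\sup_X V_E^*\ge \sup_X V_E=\infty$.

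\emph{$\sup_X V_E^*=\infty$ implies $V_E^*\equiv\infty$.} Since $\sup_X V_E=\sup_X V_E^*$ (as remarked before the definition of $T_\omega$), we get $T_\omega(E)=0$. By Proposition~\ref{prop:ATcap} there are $\psi_j\in PSH(X,\omega)$ with $\sup_X\psi_j=0$ and $\sup_E\psi_j\le -2^j$. Put $\varphi:=\sum_j 2^{-j}\psi_j$. Exactly as in the paragraph after Proposition~\ref{lppgpp}, using the $L^1$-boundedness of normalized $\omega$-psh functions and $\sum_j 2^{-j}\le 1$, we get $\varphi\in PSH(X,\omega)$. Moreover $\varphi=-\infty$ on $E$, since each term is at most $-1$ there. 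Thus $\varphi+c\le 0$ on $E$ for every $c$, so $V_E\ge\varphi+c$ and $V_E=\infty$ off the set $P:=\{\varphi=-\infty\}$. The set $P$ is pluripolar, hence has empty interior (it is locally the $-\infty$ locus of a psh function $\not\equiv-\infty$). So $V_E=\infty$ on a dense set, and therefore $V_E^*\equiv\infty$.

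\emph{$V_E^*\equiv\infty$ implies pluripolar.} I argue by contrapositive, which also gives (ii). Suppose $E$ is not pluripolar. By the previous steps we must have $\sup_X V_E=M<\infty$. Indeed, otherwise $T_\omega(E)=0$, and the construction above produces $\varphi\in PSH(X,\omega)$ with $E\subset\{\varphi=-\infty\}$, contradicting the non-pluripolarity of $E$ via Proposition~\ref{lppgpp}. So the family defining $V_E$ is uniformly bounded above by $M$, and Proposition~\ref{prop:envelope}(ii) gives $V_E^*\in PSH(X,\omega)$. In particular $V_E^*\not\equiv\infty$. This closes the cycle of equivalences in (i) and proves the first claim of (ii).

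\emph{$V_E^*=0$ on $\mathrm{int}(E)$.} Every competitor $v$ satisfies $v\le 0$ on $E$, so $V_E\le 0$ on $E$. On the open set $\mathrm{int}(E)$ the upper semicontinuous regularization only uses values in that open set, so $V_E^*\le 0$ there. Since $0\in PSH(X,\omega)$ is itself a competitor, $V_E\ge 0$, and hence $V_E^*\ge 0$. Therefore $V_E^*\equiv 0$ on $\mathrm{int}(E)$.

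The main obstacle is the step $\sup_X V_E=\infty\Rightarrow E$ pluripolar, namely building $\varphi$. This needs the $L^1$-compactness of $\{v\in PSH(X,\omega):\sup_X v=0\}$ on a Hermitian, non-K\"ahler manifold, which the paper treats as well known. It also needs the fact that $\varphi$ is upper semicontinuous and not identically $-\infty$. Both follow from the argument already given after Proposition~\ref{lppgpp}, so I would cite that paragraph directly.
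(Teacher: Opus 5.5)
Your proof is correct. It follows the same skeleton as the paper: the Alexander--Taylor capacity via Proposition~\ref{prop:ATcap}, Vu's equivalence in Proposition~\ref{lppgpp}, and Proposition~\ref{prop:envelope}(ii) for part (ii). The difference is in what you prove yourself. The paper simply cites \cite[Lemma~2.6]{Vu19} for ``pluripolar $\Leftrightarrow \sup_X V_E^*=\infty$'' and \cite[Theorem~9.17]{GZ17} for ``$\sup_X V_E^*=\infty\Rightarrow V_E^*\equiv\infty$''. You prove both directly by building $\varphi=\sum_j 2^{-j}\psi_j$ with $\sup_E\psi_j\le -2^j$, which is the same device the paper uses after Proposition~\ref{lppgpp} for countable unions. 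You then use that $X\setminus\{\varphi=-\infty\}$ is dense.

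The paper's version is shorter, but it leans on external results and on adapting \cite[Theorem~9.17]{GZ17} to the Hermitian setting through Proposition~\ref{lppgpp}. Yours is self-contained. It also shows that the deep input, Vu's theorem, is needed only for ``pluripolar $\Rightarrow$ $\omega$-pluripolar'' in your first step; the converse direction used in your contrapositive is trivial from the definition of quasi-psh functions. Everything else rests only on the $L^1$-boundedness of $\{v\in PSH(X,\omega):\sup_X v=0\}$; boundedness is all you use, not compactness.

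You could even bypass Proposition~\ref{prop:ATcap}. If $\sup_X V_E=\infty$, any competitor $v$ with $\sup_X v\ge 2^j$ directly gives $\psi_j:=v-\sup_X v$ with $\sup_X\psi_j=0$ and $\sup_E\psi_j\le -2^j$.
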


\begin{proof}
(i) By \cite[Lemma~2.6]{Vu19} and Proposition~\ref{prop:ATcap}, \(E\) is pluripolar if and only if \(\sup_X V_E^*=\infty\). \(\sup_X V_E^*=\infty\) implies \(V_E^*\equiv\infty\) by \cite[Theorem~9.17]{GZ17} and Proposition~\ref{lppgpp}.

(ii) By (i) and Proposition~\ref{prop:envelope}, \(V_E^*\in PSH(X,\omega)\). \(V_E=0\) on \(E\) gives the rest. 
\end{proof}


We also have the following properties as in 
\cite[Proposition~9.19]{GZ17}.
\begin{prop}\label{prop:elementary} 
Let \(E,F,P\) be subsets of \(X\). 
\begin{enumerate}
\item[(i)]
$E\subset F$ implies $V_F \leq V_E$.
\item[(ii)]
If $E$ is open, $V_E = V^*_{E}$.
\item[(iii)]
If $P$ is pluripolar, $V_{E\cup P}^* = V_E^*$.
\item[(iv)]
If $E_j\subset X$ is an increasing sequence and $E$ is the limit, 
$\lim_{j\to \infty}V_{E_j}^* = V_E^*.$
\item[(v)]
If compact $K_j\subset X$ is a decreasing sequence and $K$ is the limit,\\
$\lim_{j\to \infty} V_{K_j} = V_K$ and $\lim_{j\to \infty} V_{K_j}^*  = V_K^*$ a.e..
\end{enumerate}
\end{prop}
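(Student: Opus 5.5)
We take the five items in order. Items (i)–(iii) are direct consequences of the definition \eqref{eq:SZ-ext} and of Propositions~\ref{prop:envelope}, \ref{lppgpp} and \ref{prop:pluripolarextremal}. Items (iv) and (v) need a Choquet-type limit argument plus a compactness (Dini) argument.

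\textbf{Items (i)–(iii).} For (i), every $v\in PSH(X,\omega)$ with $v|_F\le 0$ also satisfies $v|_E\le0$, so the competing family for $V_F$ is contained in that for $V_E$.

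For (ii), let $E$ be open and $x\in X$. If $E$ is pluripolar, then $E=\emptyset$ (an open pluripolar set is empty), and $V_\emptyset\equiv\infty$, so there is nothing to prove. Otherwise $V_E^*\in PSH(X,\omega)$ by Proposition~\ref{prop:pluripolarextremal}. Since $V_E=0$ on $E$ and $E$ is open, $V_E^*=0$ on $E$. Hence $V_E^*$ is itself a competitor, so $V_E^*\le V_E$, and equality follows.

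For (iii), monotonicity (i) gives $V_{E\cup P}^*\le V_E^*$. If $E$ is pluripolar, then so is $E\cup P$ (countable unions of pluripolar sets are pluripolar), and both sides are $\equiv\infty$. Otherwise, Proposition~\ref{lppgpp} gives $\psi\in PSH(X,\omega)$ with $\sup_X\psi=0$ and $P\subset\{\psi=-\infty\}$. For a competitor $v$ for $E$, set $v_\epsilon=(1-\epsilon)v+\epsilon\psi$. This lies in $PSH(X,\omega)$ and is $\le0$ on $E\cup P$, so $v_\epsilon\le V_{E\cup P}$. Letting $\epsilon\to0$ off the pluripolar set $\{\psi=-\infty\}$ gives $V_E\le V_{E\cup P}$ outside a pluripolar set. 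Two $\omega$-psh functions (here $V_E^*$ and $V_{E\cup P}^*$) that agree outside a pluripolar set, hence almost everywhere, coincide. This yields $V_E^*\le V_{E\cup P}^*$.

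\textbf{Item (iv).} The sequence $V_{E_j}^*$ is decreasing and bounded below by $V_E^*$. If $E$ is pluripolar, all sets are pluripolar and both sides are $\infty$. Otherwise, for large $j$ the $V_{E_j}^*$ are $\omega$-psh, and their limit $u$ is $\omega$-psh with $u\ge V_E^*$. On $E_j$ we have $V_{E_j}^*=V_{E_j}=0$ except on the set $N_j=\{V_{E_j}<V_{E_j}^*\}$, which is pluripolar by Bedford–Taylor negligibility. In the Hermitian setting this is obtained locally through Vu's characterization via Proposition~\ref{lppgpp}. Thus $u\le0$ on $E\setminus\bigcup N_j$, so $u\le V^*_{E\setminus \cup N_j}=V_E^*$ by (iii).

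\textbf{Item (v).} For compact $K_j\downarrow K$, (i) gives $V_{K_j}\uparrow$ and $\le V_K$. For the reverse inequality, take a competitor $v$ for $K$ and $\epsilon>0$. Approximate $v$ from above by a decreasing sequence of continuous $\omega_\epsilon$-psh functions, i.e. with $(1+\epsilon)\omega$ in place of $\omega$ (regularization on Hermitian manifolds, as in Vu). By Dini's theorem on compact sets and $\bigcap K_j=K$, some continuous approximant $w$ with $w\ge v$ satisfies $w<\epsilon$ on $K_j$ for large $j$. Then $(w-\epsilon)/(1+\epsilon)$ is a competitor for $K_j$ relative to $\omega$, so $\lim_j V_{K_j}\ge v$ after letting $\epsilon\to0$. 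The a.e. statement then follows: $\lim V_{K_j}^*$ is an increasing limit of $\omega$-psh functions, so it agrees a.e. with its regularization. Being squeezed between $V_K=\lim V_{K_j}$ and $V_K^*$, it equals $V_K^*$ a.e.

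\textbf{Main obstacle.} The hardest step is the regularization in (v), together with the negligibility input in (iv). On a non-Kähler Hermitian manifold one must allow the small loss $\omega\to(1+\epsilon)\omega$ and rescale, as indicated above.
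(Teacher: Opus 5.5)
Your proposal is correct and is essentially the standard proof of \cite[Proposition~9.19]{GZ17} carried over to the Hermitian setting: local Bedford--Taylor negligibility plus Proposition~\ref{lppgpp} for (iv), and regularization plus a Dini-type compactness argument for (v). That is exactly what the paper relies on, since it gives no proof beyond that citation.

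The one step to tighten is in (iii). The inequality $V_E\le V_{E\cup P}$ off a pluripolar set does not by itself pass to upper regularizations. Instead, apply the a.e.\ comparison of $\omega$-psh functions to each competitor $v$ against $V^*_{E\cup P}$, as the paper does for $h_{P_2}$ in the proof of Lemma~\ref{lem:BTcap_globalandlocal}. This gives $V_E\le V^*_{E\cup P}$ everywhere, and hence $V_E^*\le V^*_{E\cup P}$.
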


The lower semicontinuity of the extremal functions on \(X\) for compact sets holds.  
\begin{lem} \label{lem:lower-semicontinuity} If \(K\) is a compact subset of \(X\), then \(V_K\) is lower semicontinuous. 
\end{lem}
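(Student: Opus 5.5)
The plan is to show that $V_K$ is a supremum of continuous $\omega$-psh functions, each of which is $\le 0$ on $K$, so that it is lower semicontinuous as a supremum of continuous functions. If $K$ is pluripolar, then Proposition~\ref{prop:pluripolarextremal}(i) gives $V_K^*\equiv\infty$. Since $K$ is closed and pluripolar, it has empty interior. I would then show that $V_K=\infty$ on $X\setminus K$ and $V_K=0$ on $K$: take $v\in PSH(X,\omega)$ with $K\subset\{v=-\infty\}$, and modify $v$ by constants or small perturbations. Lower semicontinuity then holds because $X\setminus K$ is open. To avoid this special treatment, I would instead run the general argument below, which does not need non-pluripolarity.

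Fix $x_0\in X$ and a number $t<V_K(x_0)$. Pick $v\in PSH(X,\omega)$ with $v|_K\le 0$ and $v(x_0)>t$. The key step is to approximate $v$ from above by continuous $\omega$-psh functions. Since $X$ is only Hermitian, global Demailly regularization with an arbitrarily small loss of positivity is available: there are smooth $v_j\searrow v$ with $\omega+dd^c v_j\ge -\epsilon_j\omega$ and $\epsilon_j\to 0$. I would absorb the loss by scaling, setting $w_j:=(1+\epsilon_j)^{-1}v_j$. Then
\[\notag
\omega+dd^c w_j=(1+\epsilon_j)^{-1}\bigl(\omega+dd^c v_j+\epsilon_j\omega\bigr)\ge 0,
\]
so $w_j$ is a smooth element of $PSH(X,\omega)$.

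Because $K$ is compact, $v\le 0$ on $K$, and $v_j\searrow v$ with $v_j$ continuous, Dini/Hartogs-type reasoning gives, for any $\delta>0$, $\sup_K v_j<\delta$ for $j$ large. Each point $y\in K$ has a neighbourhood on which $v_{j}<\delta$ for some $j$, by upper semicontinuity of $v$ and monotone convergence. Compactness and monotonicity then give a uniform $j$. It follows that $w_j-\delta\le 0$ on $K$ and $w_j-\delta\in PSH(X,\omega)$ is continuous. Moreover $w_j(x_0)-\delta\ge (1+\epsilon_j)^{-1}v(x_0)-\delta>t$ for $j$ large and $\delta$ small, using $v(x_0)>t$ (if $v(x_0)$ is finite). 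The same continuous function exceeds $t$ on a neighbourhood of $x_0$, so $V_K>t$ near $x_0$. This is exactly lower semicontinuity.

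The main obstacle is the regularization step on a non-Kähler Hermitian manifold with a loss $\epsilon_j\to 0$ and a decreasing sequence. I would cite Demailly's regularization or the Błocki--Kołodziej technique, adapted to Hermitian $\omega$ via a partition of unity; the positivity error of the gluing tends to zero with the convolution scale. If only local regularization is easy, a fallback is to glue local convolutions using Proposition~\ref{prop:envelope}(i) (the max construction). A second fallback is to replace "decreasing" by "uniform upper bounds on the compact $K$ plus pointwise convergence at $x_0$", which suffices for the argument above.
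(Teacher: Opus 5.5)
Your argument is correct and is essentially the paper's proof. The paper also regularizes a competitor $v$ by smooth $\omega$-psh functions $v_j\searrow v$, citing B{\l}ocki--Ko{\l}odziej, which on a compact Hermitian manifold gives this with no loss of positivity, so your $(1+\epsilon_j)^{-1}$ rescaling is harmless but unnecessary; it then uses a Hartogs/Dini compactness argument on $K$ to get $v_j\le\delta$ there, so that $V_K$ is a supremum of continuous competitors.

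Two cautions. First, justify the regularization by B{\l}ocki--Ko{\l}odziej rather than by Demailly's smooth regularization or by partition-of-unity gluing: the former in general loses a term governed by Lelong numbers and the curvature of $TX$, and the latter produces error terms involving differences of the local approximants and their gradients, which need not be small. Second, drop your discarded first paragraph, because $V_K\equiv\infty$ on $X\setminus K$ fails for pluripolar $K$: for a closed disc $K$ in a line $L\subset\mathbb{CP}^2$, restricting competitors to $L$ shows $V_K<\infty$ on $L$.
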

\begin{proof} Let \(v\) be a competitor for \(V_K\) as $v \in PSH(X, \omega)$, $v|_K\leq 0$. 
\cite{BK07} guarantees a sequence $v_j \in PSH(X,\omega) \cap C^\infty(X)$ decreasing to $v$. Fix any \(\delta>0\).
For each locally defined bounded smooth function \(\rho\) with \(dd^c\rho\geq \omega\), smooth psh \(v_j+\rho\) decrease to psh \(v+\rho\) pointwisely, so the convergence holds in \(L^1_{loc}\)-topology and as distributions on the domain of \(\rho\) by Hartogs lemma \cite[Theorem~1.46]{GZ17}. 
\cite[Theorem~1.46]{GZ17} also tells that on each compact subset \(K_\rho\) of the intersection of \(K\) and the domain of \(\rho\), since \(\rho\) is lower semicontinuous on \(K_\rho\), for some \(N(K_\rho)\in\mathbb{N}\), 
$$v_j=(v_j+\rho)-\rho\leq ((v+\rho)-\rho))+\delta=v+\delta\quad \text{on }K_{\rho}, \quad j\geq N(K_{\rho}).$$
\(K\) can be covered by finitely many such \(K_{\rho}\), so \(v_j|_K\leq (v+\delta)|_K\leq \delta\) for large \(j\).
Thus \(V_K=\sup\{\varphi\in PSH(X,\omega)\cap C^{\infty}(X):\varphi|_K\leq0\}\) is lower semicontinuous.
\end{proof}
\begin{cor}\label{cor:semicontinuity} If \(K\) is a compact subset of \(X\), then $V_K$ is continuous if and only if $V_K^*=0$ on \(K\).
\end{cor}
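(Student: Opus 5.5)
The plan is to prove both directions separately, using Lemma~\ref{lem:lower-semicontinuity} for one and the definition of the upper semicontinuous regularization for the other.

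\emph{Sufficiency.} Suppose \(V_K^*=0\) on \(K\). By definition \(V_K\le V_K^*\) everywhere. Any competitor \(v\) for \(V_K\) satisfies \(v|_K\le 0\), so \(V_K\le 0\) on \(K\). Whenever \(K\) is nonempty, the constant function \(0\) belongs to \(PSH(X,\omega)\) and is a competitor, which gives \(V_K\ge 0\). Hence \(V_K=0\) on \(K\).

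Next I rule out pluripolarity. If \(K\) were pluripolar, Proposition~\ref{prop:pluripolarextremal}(i) would give \(V_K^*\equiv\infty\), contradicting \(V_K^*=0\) on \(K\). So \(K\) is not pluripolar, and Proposition~\ref{prop:pluripolarextremal}(ii) gives \(V_K^*\in PSH(X,\omega)\). Since \(V_K^*\le 0\) on \(K\), the function \(V_K^*\) is itself a competitor, so \(V_K^*\le V_K\) on \(X\).

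Combining the two inequalities, \(V_K=V_K^*\) on \(X\). Thus \(V_K\) is upper semicontinuous. It is also lower semicontinuous by Lemma~\ref{lem:lower-semicontinuity}, so it is continuous.

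\emph{Necessity.} Suppose \(V_K\) is continuous. Then \(V_K\) is upper semicontinuous, so \(V_K^*=V_K\). As shown above, \(V_K=0\) on \(K\), hence \(V_K^*=0\) on \(K\).

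The only delicate step is justifying \(V_K^*\in PSH(X,\omega)\), which is needed to use \(V_K^*\) as a competitor. The argument must first exclude the pluripolar case, and this is exactly where the hypothesis \(V_K^*=0\) on \(K\) enters, through Proposition~\ref{prop:pluripolarextremal}. For the degenerate case \(K=\emptyset\), \(V_K\equiv\infty\) and both conditions hold trivially, so one can assume \(K\neq\emptyset\) throughout.
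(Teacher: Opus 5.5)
Your proof is correct and follows the paper's argument. Lower semicontinuity comes from Lemma~\ref{lem:lower-semicontinuity}, and Proposition~\ref{prop:pluripolarextremal} gives $V_K^*\in PSH(X,\omega)$, so $V_K^*$ is a competitor and $V_K=V_K^*$; the converse uses $V_K^*=V_K$ together with $0\le V_K\le 0$ on $K$. You only make explicit two steps the paper leaves implicit: ruling out pluripolarity via Proposition~\ref{prop:pluripolarextremal}(i), and the trivial case $K=\emptyset$.
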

\begin{proof}
\(V_K\) is lower semicontinuous
by Lemma~\ref{lem:lower-semicontinuity}.
If \(V_K^* = 0\) on \(K\), then  
\(V_K^*\in PSH(X,\omega)\) by Proposition~\ref{prop:pluripolarextremal}, \(V_K^*\leq V_K\), \(V_K=V_K^*\), \(V_K\) is upper semicontinuous and thus continuous. Conversely, if \(V_{K}\) is continuous, then \(V_{K}^*= V_{K}\). Accordingly, \(0\leq V_{K}^*=V_{K}\leq0\) on \(K\).
\end{proof}

\begin{remark}\label{rmk:regularity-c} Corollary~~\ref{cor:semicontinuity} implies that the 
continuity of $V_K$ for compact \(K\) does not depend on the Hermitian metric $\omega$ since for another hermitian metric $\omega'$, $\omega' \leq c \omega$ for some constant \(c>0\), $V_{\omega';K}^* \leq c V_{\omega;K}^*=cV_{K}$. 
\end{remark}

The (zero-one) relative extremal function for a set $E$ in $X$ is defined by
\[\label{eq:r-ext}
	h_E (z) := \sup\left\{ v(z): v\in PSH(X,\omega),\; v\leq 1, 
    \;v|_E\leq 0\right\}.
\]
By Proposition~\ref{prop:envelope}, 
$h_E^* \in PSH(X,\omega)$. 
Lemma~\ref{lem:BTcap_globalandlocal} proves that \(E\) is pluripolar if and only if \(h_E^*\equiv 1\) on the compact Hermitian manifold \(X\).
For a non-pluripolar compact set $K\subset X$, denote $M_K := \sup_{X}V_K^*=\sup_{X}V_K<\infty$. Then we have 
\[\label{eq:zero-one-function}
	V_K^* \leq M_K h_K^*.\]


The global Bedford-Taylor capacity is comparable (bi-Lipschitz equivalent) to the local Bedford-Taylor capacity. 
The proof is similar to the compact K\"ahler case as in \cite[page 52-53]{Ko05}, \cite[Proposition~9.8]{GZ17}. This comparability, Proposition~\ref{lppgpp}, \cite[Corollary~4.36]{GZ17} and \cite[Theorem~4.40]{GZ17} together characterize Borel pluripolar sets and pluripolar sets via the global Bedford-Taylor capacity and the outer global Bedford-Taylor capacity respectively. 
Characterization of a pluripolar set using its relative extremal function is also possible.

Let \(\Omega\) be a smoothly bounded strictly pseudoconvex domain in \(\mathbb{C}^n\). 
Bedford and Taylor in \cite{BT82} first studied the relative capacity of a Borel subset \(E\) 
in \(\Omega\), which is defined by 
$$Cap(E,\Omega):=\sup\{\int_{E}(dd^cu)^n:u\in PSH(\Omega),\,0\leq u\leq 1\}.$$
The global Bedford-Taylor capacity of a Borel subset \(E\) in \(X\) is defined as 
$$Cap_{\omega}(E):=\sup\{\int_{E} (\omega+dd^cv)^n:v\in PSH(X,\omega),\,0\leq v\leq1\}.$$ 
Dinew in \cite{De16} pointed out that smooth \(\omega>0\) and compact \(X\) guarantees a constant \(C_{\omega}>0\) satisfying \[
-C_{\omega}\omega^2\leq ni\partial\bar{\partial}\omega\leq C_{\omega}\omega^2,
\quad
-C_{\omega}\omega^3\leq n^2i\partial\omega\wedge\bar{\partial}\omega\leq C_{\omega}\omega^3.\label{eq:condition(B)}
\]
Using \eqref{eq:condition(B)} and induction,
\cite[Proposition~2.3]{DK12} verified that \(Cap_{\omega}(X)\) is finite. 

The outer global Bedford-Taylor capacity of a subset \(E\) in \(X\) is defined as 
$$Cap_{\omega}^*(E):=\inf\{Cap_{\omega}(O):O\;\text{open, }E\subset O\subset X\}.$$

\begin{lem}\label{lem:BTcap_globalandlocal} Let \(\rho_j\), \(1\leq j\leq N<\infty\), be finitely many smooth strictly psh functions 
on some holomorphic charts of \(X\) properly containing the closures of \(U_j=\{\rho_j<0\}\neq\emptyset\) respectively. Assume for some constant \(\delta>0\), \(U_{j,\delta}=\{\rho_j<-\delta\}\) cover \(X\).
Then there exist some constants \(C_j=C_j(\rho_j,X,\omega)>1\),  \(C_j'=C_j'(\rho_j,X,\omega,\delta)>1\) such that 
for each \(1\leq j_0\leq N\) and Borel subset \(E\subset X\),
\[\begin{aligned} 
\label{eq:omcap-under-relcap}
Cap_{\omega}(E\cap U_{j_0,\delta})&\leq C_{j_0} Cap(E\cap U_{j_0,\delta},U_{j_0}),\\
Cap_{\omega}(E)
\leq\sum_{j=1}^NCap_{\omega}({E\cap U_{j,\delta}})
&\leq(\max_{1\leq j\leq N}C_j)\sum_{j=1}^NCap({E\cap U_{j,\delta}},U_j),\end{aligned}\]
\[\label{eq:relcap-under-omcap}
\begin{aligned}
Cap(E\cap U_{j_0,\delta},U_{j_0})
&\leq C_{j_0}' Cap_{\omega}(E\cap U_{j_0,\delta})
\leq C_{j_0}' Cap_{\omega}(E),\\
\sum_{j=1}^N Cap(E\cap U_{j,\delta},U_j)
&\leq (\sum_{j=1}^N C_j') Cap_{\omega}(E).
\end{aligned}\]
Consequently, a Borel subset \(P_1\subset X\) is pluripolar if and only if \(Cap_{\omega}(P_1)=0\), and a subset \(P_2\subset X\) is pluripolar if and only if \(Cap_{\omega}^*(P_2)=0\) if and only if \(h_{P_2}^*\equiv 1\).
\end{lem}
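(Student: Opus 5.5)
The plan is to compare the two capacities through the Monge--Amp\`ere masses that define them, with the two directions handled separately. The constants \(C_{j}\) and \(C_j'\) will come from uniform bounds on \(\rho_j\), on \(\omega\) relative to \(dd^c\rho_j\), and from \(\delta\). I take \(E\) Borel throughout and then pass to outer capacities for arbitrary sets at the end.

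\emph{Upper bound for \(Cap_\omega\).}
\begin{itemize}
\item[(1)] Fix \(j_0\) and write \(U=U_{j_0}\), \(\rho=\rho_{j_0}\). Because \(\bar U\) is compact inside the chart and \(\rho\) is smooth and strictly psh, I choose \(A>0\) with \(\omega\le A\,dd^c\rho\) on \(\bar U\). I also set \(M:=\sup_{\bar U}|\rho|<\infty\).
\item[(2)] Let \(v\in PSH(X,\omega)\) with \(0\le v\le 1\). Then \(u:=(v+A\rho+AM)/(1+2AM)\) is psh on \(U\) and takes values in \([0,1]\).
\item[(3)] On \(U\) we have \(\omega+dd^cv\le A\,dd^c\rho+dd^c v=(1+2AM)\,dd^cu\), and both sides are positive currents. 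Hence
\[\int_{E\cap U_{j_0,\delta}}(\omega+dd^cv)^n\le(1+2AM)^n\int_{E\cap U_{j_0,\delta}}(dd^cu)^n .\]
The comparison of wedge powers of ordered positive \((1,1)\)-currents is justified by smoothing \(v\) via \cite{BK07} and using continuity of Bedford--Taylor products along decreasing sequences.
\item[(4)] This gives the first inequality with \(C_{j_0}=(1+2AM)^n\). The second line then follows from subadditivity of \(Cap_\omega\) over the cover \(\{U_{j,\delta}\}\), which holds because each measure \(\omega_v^n\) is countably subadditive.
\end{itemize}

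\emph{Upper bound for the local capacity.} This is the harder direction. Take \(u\in PSH(U)\) with \(0\le u\le1\); it has no global meaning, so I would glue it to a global function.
\begin{itemize}
\item[(1)] Put \(\tilde u:=\max\{u-1,\ \rho/\delta\}\) on \(U\).
\item[(2)] Near \(\partial U\) we have \(\rho/\delta\to0\ge u-1\), so \(\tilde u=\rho/\delta\) near \(\partial U\). Therefore \(\tilde u\) extends by \(0\) to an \(\epsilon\)-psh-type function on \(X\), after scaling.
\item[(3)] To make this rigorous, choose \(B\) large so that \(B\omega+dd^c(\rho/\delta)\ge0\) near \(\bar U\). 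Then Proposition~\ref{prop:envelope}(i), applied to \(B\omega\), gives \(w:=\max\{\tilde u,\rho/\delta\}\) on \(U\) and \(w:=0\) outside, with \(w\in PSH(X,B\omega)\). I would instead glue with \(\max\{\tilde u,\ \rho/\delta'\}\) for a slightly smaller threshold so that the gluing region is nontrivial.
\item[(4)] On \(U_{j_0,\delta}\) we have \(\rho/\delta<-1\le u-1\), so \(w=u-1\) there. Hence \((dd^cu)^n=(dd^cw)^n\) on \(U_{j_0,\delta}\).
\item[(5)] Set \(v:=(w+1)/B\). Since \(-M/\delta\le w\le0\), suitably normalising gives \(v\in PSH(X,\omega)\) with values in a bounded interval, which I rescale to \([0,1]\) at the cost of a constant.
\item[(6)] Then \((dd^cw)^n\le (B\omega+dd^cw)^n=B^n\omega_{v'}^n\) up to constants, where \(v'\) is the rescaled function. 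Hence \(Cap(E\cap U_{j_0,\delta},U)\le C'_{j_0}Cap_\omega(E\cap U_{j_0,\delta})\) with \(C'_{j_0}\) depending on \(M/\delta\) and \(B\). Monotonicity of \(Cap_\omega\) then gives the bound by \(Cap_\omega(E)\), and summing gives the last inequality.
\end{itemize}
The main obstacle is this step. The difficulty is checking that \(\omega+dd^c\) of the rescaled glued function dominates \((dd^cu)^n\), given that \(\omega\) is only Hermitian. The check works because \(\omega\ge0\): the mixed terms in \((B\omega+dd^cw)^n\) are positive, so no closedness of \(\omega\) is needed.

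\emph{Consequences.}
\begin{itemize}
\item[(1)] A Borel set \(P_1\) satisfies \(Cap_\omega(P_1)=0\) if and only if each \(Cap(P_1\cap U_{j,\delta},U_j)=0\). By \cite[Corollary~4.36]{GZ17}, this holds if and only if each piece is locally pluripolar, that is, if and only if \(P_1\) is pluripolar (Proposition~\ref{lppgpp}).
\item[(2)] For arbitrary \(P_2\), the inequalities pass to outer capacities by taking open neighbourhoods. Combined with \cite[Theorem~4.40]{GZ17}, this shows that \(P_2\) is pluripolar if and only if \(Cap^*_\omega(P_2)=0\).
\item[(3)] Finally, if \(P_2\) is pluripolar, then \(P_2\subset\{\varphi=-\infty\}\) with \(\varphi\in PSH(X,\omega)\), \(\sup_X\varphi=0\). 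For each \(\epsilon\in(0,1)\) the function \((1-\epsilon)+\epsilon\varphi\) is a competitor for \(h_{P_2}\), so \(h_{P_2}=1\) off \(\{\varphi=-\infty\}\), a set of measure zero. Since \(h_{P_2}^*\) is \(\omega\)-psh and at most \(1\), this gives \(h^*_{P_2}\equiv1\).
\item[(4)] Conversely, if \(P_2\) is not pluripolar, I would show \(h^*_{P_2}\not\equiv1\). I would pick a compact non-pluripolar \(K\) in some \(U_{j,\delta}\); such a \(K\) exists by inner regularity of the capacity on \(\{h_{P_2}^*<1\}\)-type arguments. I would then compare with the local relative extremal function, which is \(<1\) somewhere by Bedford--Taylor, through the same gluing construction as in the second part.
\end{itemize}
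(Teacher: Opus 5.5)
The real gap is in consequence (4), the implication \(h^*_{P_2}\equiv1\Rightarrow P_2\) pluripolar, and this is precisely the part the paper actually proves. Your first capacity comparison and consequences (1)--(3) are fine and match the paper, which for the capacity inequalities only points to the K\"ahler proofs of Ko{\l}odziej and Guedj--Zeriahi.

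Step (4) fails for two reasons. First, \(P_2\) is an arbitrary set, and a non-pluripolar set need not contain any non-pluripolar compact subset. A Bernstein set in a coordinate ball has only countable compact subsets, yet it is not pluripolar. So your ``inner regularity'' step has no basis outside the Borel/Suslin class. Second, suppose you did have a compact non-pluripolar \(K\subset P_2\cap U_{j,\delta}\). To get \(h^*_K\not\equiv1\) you need an \emph{upper} bound for \(h_K\). Your gluing turns local competitors into global ones, so it only bounds global extremal functions from below, which is the wrong direction. Restricting global competitors instead gives \(h_K+A\rho_j\le u_{K;U_j}\), i.e.\ \(h^*_K\le u^*_{K;U_j}-A\rho_j\). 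This need not be \(<1\) anywhere, because \(-A\rho_j>0\) and \(A\sup|\rho_j|\) may exceed \(1\).

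The paper closes this direction with \cite[Lemma~6.4]{KN25}: \(Cap^*_{\omega}(P_2)\le C\sum_{s=0}^n\int_X(1-h^*_{P_2})(\omega+dd^ch^*_{P_2})^s\wedge\omega^{n-s}\). The right-hand side vanishes when \(h^*_{P_2}\equiv1\).

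A shorter repair also fits your framework. Since \(0\) is a competitor, \(h_{P_2}=0\) on \(P_2\). If \(h^*_{P_2}\equiv1\), then \(P_2\subset\{h_{P_2}<h^*_{P_2}\}\). In each chart, after adding a bounded smooth local potential \(\rho\) with \(dd^c\rho\ge\omega\), this set is the negligible set of a family of psh functions uniformly bounded above. It is therefore pluripolar by Bedford--Taylor \cite{BT82}.

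The gluing in your second part is also wrong as written, though it can be fixed. The claim \(\tilde u=\rho/\delta\) near \(\partial U\) fails, because \(u-1\) may tend to \(0\) there (take \(u\equiv1\)). Moreover, \(\tilde u\) tends to \(0\) from below at \(\partial U\). Extending it by \(0\) therefore creates a \(\min\{\cdot,0\}\)-type concave kink that no smooth form \(B\omega\) can absorb. Proposition~\ref{prop:envelope}-(i) with \(v_2\equiv0\) just returns \(\max\{\tilde u,0\}\equiv0\) on \(U\).

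Instead, take a smooth \(\psi\) on \(X\) equal to \((2/\delta)\rho_{j_0}\) near \(\overline{U_{j_0}}\), cut off inside the chart. Then \(\psi\in PSH(X,B\omega)\) for some \(B\). Apply Proposition~\ref{prop:envelope}-(i) to \(v_1=u-2\) on \(U_{j_0}\) and \(v_2=\psi\); this is allowed since \(u-2\le-1<0=\psi\) on \(\partial U_{j_0}\). The result is a bounded \(B\omega\)-psh function equal to \(u-2\) on \(U_{j_0,\delta}\), because there \(\psi<-2\le u-2\). With this, your steps (4)--(6) go through.
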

\begin{proof}
We here give the proof of the characterization of pluripolar sets using their relative extremal functions.
Suppose \(P_2\) is pluripolar. Let \(v\in PSH(X,\omega)\) with \(v\leq1\) be a competitor for \(h_{\emptyset}\equiv1\). Since \(P_2\) is pluripolar, \(P_2\subset\{\varphi=-\infty\}\) for some \(1\geq\varphi\in PSH(X,\omega)\). For each \(\epsilon\in(0,1)\), \((1-\epsilon)v+\epsilon\varphi\leq h_{P_2}\), 
$$\lim_{\epsilon\to0^+}((1-\epsilon)v+\epsilon\varphi)|_{\{\varphi>-\infty\}}=v|_{\{\varphi>-\infty\}}\leq h_{P_2}|_{\{\varphi>-\infty\}},$$
where \(\{\varphi\equiv-\infty\}\) has zero volume. Thus \(v\leq h_{P_2}\leq h_{P_2}^*\) a.e. on \(X\) and \(v+\rho\leq h_{P_2}^*+\rho\) a.e. on the domain of each locally defined smooth function \(\rho\) with \(dd^c\rho\geq\omega\). 
The functions u\(v+\rho\) and \(h_{P_2}^*+\rho\) are plurisubharmonic on the domain of \(\rho\), so \(v+\rho\leq h_{P_2}^*+\rho\) on the domain of \(\rho\). 
Then, \(v\leq h_{P_2}^*\). 
Accordingly, \(1\equiv h_{\emptyset}\leq h_{P_2}^*\leq 1\) and we have \(h_{P_2}^*\equiv 1\).

For the converse part, suppose \(h_{P_2}^*\equiv 1\). By \cite[Lemma~6.4]{KN25} with \(m=n\), 
$$0\leq Cap_{\omega}^*(P_2)\leq C\sum_{s=0}^n\int_X-(h_{P_2}^*-1)(dd^ch_{P_2}^*+\omega)^s\wedge\omega^{n-s}$$
holds, where 
\(0\leq C\) is a uniform constant depending only on \((X,\omega
)\). 
Since \(h_{P_2}^*\equiv 1\), we have \(Cap_{\omega}^*(P_2)=0\). Therefore, \(P_2\) is pluripolar.
\end{proof}

For a non-pluripolar compact subset \(K\subset X\), Vu \cite[Proposition~2.9]{Vu19} obtained the following estimate of \(M_K\) in \eqref{eq:zero-one-function}
using the global Bedford-Taylor capacity. 
\begin{lem}\label{lem:BTcapATcap}
There exists a uniform positive constant \(A=A(X,\omega)\) depending only on \((X,\omega)\) such that, for every non-pluripolar compact subset \(K\) in \(X\), 
\[\label{eq:cap-comparison}0<\frac{(\int_X(dd^cV_K^*+\omega)^n)^{\frac{1}{n}}}{[Cap_{\omega}(K)]^{\frac{1}{n}}}\leq\max\{1,\sup_X V_K\},\quad \sup_X V_K\leq\frac{A}{Cap_{\omega}(K)}.\]
\end{lem}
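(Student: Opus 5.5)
The plan is to prove the two inequalities in \eqref{eq:cap-comparison} separately, working throughout with \(V:=V_K^*\). By Proposition~\ref{prop:pluripolarextremal}, \(V\in PSH(X,\omega)\), and \(V\geq0\) because \(0\) is a competitor. Hence \(0\le V\le M_K<\infty\). By Lemma~\ref{lem:BTcap_globalandlocal}, non-pluripolarity of \(K\) gives \(Cap_{\omega}(K)>0\), so both quotients make sense.

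\emph{First inequality.} Put \(M:=\max\{1,M_K\}\) and \(u:=V/M\). Then \(0\le u\le1\) and
\[
\omega+dd^cu=(1-M^{-1})\omega+M^{-1}(\omega+dd^cV)\ \ge\ M^{-1}\omega_V\ge0 .
\]
So \(u\) is an admissible competitor in the definition of \(Cap_{\omega}\), and
\[
Cap_{\omega}(K)\ \ge\ \int_K(\omega+dd^cu)^n\ \ge\ M^{-n}\int_K\omega_V^n .
\]
It remains to show that \(\omega_V^n\) puts all its mass on \(K\), i.e.\ \(\int_K\omega_V^n=\int_X\omega_V^n\). The right-hand side is strictly positive, since the Monge--Amp\`ere mass of a bounded \(\omega\)-psh function on a compact Hermitian manifold is positive; I would cite the known lower bound for this mass. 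Taking \(n\)-th roots then gives the first inequality.

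To see that \(\omega_V^n\) does not charge \(X\setminus K\), I would use the standard balayage argument. Take a small coordinate ball \(B\Subset X\setminus K\) and solve the Dirichlet problem \((\omega+dd^cw)^n=0\) on \(B\) with \(w=V\) on \(\partial B\); this is solvable for bounded boundary data by the Kołodziej--Nguyen theory. Glue \(\max\{w,V\}\) on \(B\) with \(V\) outside \(B\) using Proposition~\ref{prop:envelope}(i). The glued function is a competitor for \(V_K\) because \(B\) misses \(K\), so \(w\le V\) on \(B\). The comparison principle then forces \(V=w\) on \(B\), hence \(\omega_V^n=0\) there. (One could instead approximate \(V\) from above via Choquet's lemma, but the local argument suffices.)

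\emph{Second inequality.} Set \(\psi:=V-M_K\), so \(\psi\in PSH(X,\omega)\) and \(\sup_X\psi=0\). Since \(V=V_K=0\) on \(K\) outside a pluripolar set \(P\) (the negligible set \(\{V_K<V_K^*\}\)), for every \(\varepsilon>0\) we have
\[
K\setminus P\subset\{\psi<-M_K+\varepsilon\}.
\]
By Lemma~\ref{lem:BTcap_globalandlocal}, the Borel pluripolar part carries no capacity, because \(Cap_{\omega}\) is subadditive and vanishes on Borel pluripolar sets. Hence
\[
Cap_{\omega}(K)\le Cap_{\omega}(\{\psi<-(M_K-\varepsilon)\}).
\]
The needed uniform estimate is
\[
Cap_{\omega}(\{\psi<-s\})\le \frac{A}{s}\qquad\text{for all }\psi\in PSH(X,\omega),\ \sup_X\psi=0,\ s>0 .
\]
Assuming this, letting \(\varepsilon\to0\) gives \(M_K\le A/Cap_{\omega}(K)\). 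If \(M_K\) is small the bound is trivially arranged by enlarging \(A\), since \(Cap_{\omega}(X)<\infty\) by \cite[Proposition~2.3]{DK12}.

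\emph{Main obstacle.} The hard step is the uniform estimate above. For a competitor \(v\) with \(0\le v\le1\) one has
\[
\int_{\{\psi<-s\}}\omega_v^n\le \frac1s\int_X(-\psi)\,\omega_v^n ,
\]
so it suffices to bound \(\int_X(-\psi)\omega_v^n\) uniformly. I would expand \(\omega_v^n\) and integrate by parts repeatedly, moving \(dd^cv\) onto \(\psi\). The torsion terms \(dd^c\omega\) and \(d\omega\wedge d^c\omega\) that appear are controlled by condition \eqref{eq:condition(B)}. This reduces everything to \(\int_X(-\psi)\omega^n\), which is uniformly bounded by the compactness of \(\{\psi\in PSH(X,\omega):\sup_X\psi=0\}\) in \(L^1\). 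Carrying out this induction, as in Dinew--Kołodziej \cite{DK12}, is where the non-Kähler difficulty lies.
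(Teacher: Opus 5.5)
Your outline is correct. For the first inequality it is the natural argument: rescale \(V_K^*\) by \(\max\{1,M_K\}\) to get a competitor for \(Cap_\omega\), use that \((\omega+dd^cV_K^*)^n\) puts no mass off \(K\), and get strict positivity from non-collapsing of \(\omega\). That positivity remark is the only comment the paper adds there.

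The paper gives no proof beyond citing \cite[Proposition~2.9]{Vu19}. It does stress that the second inequality uses nothing beyond the \(L^1(X,\omega^n)\)-boundedness of \(\{v\in PSH(X,\omega):\sup_Xv=0\}\), and this is where your route differs. You derive \(Cap_\omega(\{\psi<-s\})\le A/s\) from a global Chern--Levine--Nirenberg inequality, integrating by parts on \(X\) and absorbing torsion via \eqref{eq:condition(B)}. That can be carried out, but it is exactly the step you leave open, and it needs care. You must work with truncations \(\max\{\psi,-t\}\) and move derivatives off \(\psi\). Cross terms like \(\int u\,d\psi\wedge d^c\omega\wedge\cdots\) cannot be handled by Cauchy--Schwarz against \(\int d\psi\wedge d^c\psi\wedge\cdots\), which can be infinite (e.g.\ for logarithmic poles).

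A torsion-free alternative, in the spirit of the paper's remark, runs as follows. The easy half of Lemma~\ref{lem:BTcap_globalandlocal}, which only uses \(\omega\le dd^c\rho\) locally, gives \(Cap_\omega(\{\psi<-s\})\le C\sum_j Cap(\{\psi<-s\}\cap U_{j,\delta},U_j)\). On \(U_j\), the function \(\psi+\rho_j'\), with \(dd^c\rho_j'\ge\omega\) and \(\rho_j'\le 0\), is psh and nonpositive. So the classical local estimate \(Cap(\{u<-s\}\cap U_{j,\delta},U_j)\le C_j\|u\|_{L^1(U_j)}/s\), together with the \(L^1\)-bound, gives \(A/s\).

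What this buys is robustness. It transfers verbatim to the non-smooth form \(\theta=c\omega+dd^c\phi\) in Corollary~\ref{cor:loc-w-c}, whereas your version relies on smoothness of \(\omega\) and its torsion bounds. Also, apply the sublevel estimate to \(v-\sup_Xv\) for each competitor \(v\) of \(V_K\), as Proposition~\ref{prop:ATcap} suggests. That removes the need for the negligible set \(P\).

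Your balayage step needs two small repairs.

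First, the glued function coincides with \(V_K^*\) on \(K\), which may be positive at irregular points. So it is only a competitor for \(V_{K\setminus P}\) with \(P\) pluripolar, and you need Proposition~\ref{prop:elementary}(iii) to conclude that it is \(\le V_K^*\).

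Second, \(V_K^*|_{\partial B}\) is merely upper semicontinuous. Take continuous \(\phi_j\downarrow V_K^*|_{\partial B}\) and solve the homogeneous problem with data \(\phi_j\) (Ko\l{}odziej--Nguyen). Apply the domination principle to each solution \(w_j\), for which \(\liminf_{\partial B}(w_j-V_K^*)\ge 0\) genuinely holds. Then pass to the decreasing limit \(w\). It satisfies \(\limsup_{y\to x}w(y)\le V_K^*(x)\) on \(\partial B\), as required by Proposition~\ref{prop:envelope}(i).
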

In \eqref{eq:cap-comparison}, since the Hermitian metric \(\omega\) is non-collapsing, \(\int_X(dd^cV_K^*+\omega)^n>0\). 
To prove the inequality \(\sup_X V_K\leq A/Cap_{\omega}(K)\) in \eqref{eq:cap-comparison}, Vu only used the fact that the family \(\{v\in PSH(X,\omega):\sup_X v=0\}\) is bounded in \(L^1(X,\omega^n)\).

For subsets \(E_1\subset E_2\) in \(X\) and real-valued functions \(\phi_1\leq \phi_2\) on \(X\), there is also monotonicity for weighted extremal functions as
\[ \label{eq:monotonicity}
	V_{E_2, \phi_2} \leq V_{E_1,\phi_1} \leq V_{E_1, \phi_2}.
\]
The following lemma tells other basic properties of weighted extremal functions. 
\begin{lem}\label{lem:weight-vs-unweight} 
For a compact subset \(K\) in \(X\) and a bounded function \(\phi:X\to\mathbb{R}\),
\begin{itemize}
\item[(i)] $V_K^* + \inf_K \phi \leq V_{K,\phi}^* \leq V_K^* + \sup_K \phi.$
\item[(ii)] for the current $\theta := \omega + dd^c \phi$
of bidegree \((1,1)\) and
the function $V_{\theta;K} := \sup\{v \in PSH(X, \theta): v|_K \leq 0\}$ where \(PSH(X,\theta):=\{v:X\to\mathbb{R}\cup\{-\infty\}:
v\text{ is quasi-psh},\,\theta+dd^cv\geq0\}\), $V_{K, \phi}=V_{\omega;K,\phi} = V_{\theta;K} + \phi.$
\item[(iii)] if \(\phi\) is additionally assumed to be lower semicontinuous, then $V_{K,\phi}$ is continuous if and only if $V_{K,\phi}^*\leq\phi$ on \(K\).
\end{itemize}
\end{lem}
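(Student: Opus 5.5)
Parts (i) and (ii) will be proved by direct competitor comparisons, and (iii) will then follow from (i) together with the lower semicontinuity argument of Lemma~\ref{lem:lower-semicontinuity}.

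\emph{Part (i).} If \(v\in PSH(X,\omega)\) and \(v|_K\le 0\), then \(v+\inf_K\phi\le\phi\) on \(K\). Hence \(V_K+\inf_K\phi\le V_{K,\phi}\). Conversely, if \(v|_K\le\phi\), then \(v-\sup_K\phi\le 0\) on \(K\), so \(V_{K,\phi}\le V_K+\sup_K\phi\). Since \(\phi\) is bounded, these are pointwise inequalities between functions shifted by finite constants. Taking upper semicontinuous regularizations, which commute with adding constants, gives (i).

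\emph{Part (ii).} I read \(PSH(X,\theta)\) as the set of \(v\) such that \(v+\phi\) is quasi-psh and \(\omega+dd^c(v+\phi)\ge0\). This is the natural reading when \(\phi\) is only bounded: quasi-psh for \(\theta\) means locally psh plus smooth after adding \(\phi\). The map \(v\mapsto v+\phi\) is then a bijection from \(PSH(X,\theta)\) onto \(PSH(X,\omega)\). Under this map the constraint \(v|_K\le0\) becomes \((v+\phi)|_K\le\phi\). Taking suprema gives \(V_{\theta;K}+\phi=V_{\omega;K,\phi}\) pointwise. The only point needing care is the interpretation of the class \(PSH(X,\theta)\) when \(\phi\) is merely bounded, and I will state that convention explicitly.

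\emph{Part (iii).} I first show that \(V_{K,\phi}\) is lower semicontinuous when \(\phi\) is lower semicontinuous, copying the proof of Lemma~\ref{lem:lower-semicontinuity}. Take a competitor \(v\) with \(v|_K\le\phi\), and by \cite{BK07} choose smooth \(v_j\in PSH(X,\omega)\) decreasing to \(v\). Fix \(\delta>0\). The decreasing sequence of upper semicontinuous functions \(v_j-\phi\), restricted to compact \(K\), has limit \(v-\phi\le0\); I will invoke Dini's theorem in its upper semicontinuous form. This gives \(\max_K(v_j-\phi)<\delta\) for large \(j\), so \(v_j-\delta\) is a continuous competitor. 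Thus \(V_{K,\phi}\) is a supremum of continuous functions, hence lower semicontinuous.

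\begin{itemize}
\item If \(V_{K,\phi}^*\le\phi\) on \(K\): by (i) and Proposition~\ref{prop:pluripolarextremal}, and since a compact \(K\) admitting continuous \(V_{K,\phi}\) will be non-pluripolar (this needs a remark), \(V_{K,\phi}^*\in PSH(X,\omega)\). It is therefore a competitor, so \(V_{K,\phi}^*\le V_{K,\phi}\), which gives equality and continuity.
\item Conversely, if \(V_{K,\phi}\) is continuous, then \(V_{K,\phi}^*=V_{K,\phi}\le\phi\) on \(K\).
\end{itemize}

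The main obstacle is the pluripolar case in the first bullet. If \(K\) is pluripolar, then by (i) \(V_{K,\phi}^*\equiv\infty\), so the hypothesis \(V_{K,\phi}^*\le\phi\) fails. This makes the equivalence vacuous rather than false, and I will note it.
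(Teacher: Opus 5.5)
Your proposal is correct and follows the paper's proof. Parts (i) and (ii) are direct competitor comparisons, which the paper simply says follow from the definitions; (iii) rests on lower semicontinuity via the regularization argument of Lemma~\ref{lem:lower-semicontinuity}, plus the observation that $V_{K,\phi}^*$ is itself a competitor. Three of your details go beyond the paper and are all sound: Dini's theorem for the decreasing upper semicontinuous sequence $v_j-\phi$ on $K$ in place of the local Hartogs-lemma argument, your explicit convention for $PSH(X,\theta)$ when $\phi$ is merely bounded, and your remark that the hypothesis $V_{K,\phi}^*\le\phi$ on $K$ rules out pluripolar $K$. They make explicit what the paper leaves implicit.
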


\begin{proof} 
(i) and (ii) follow from the definitions of extremal functions. For (iii), \(V_{K,\phi}\) is lower semicontinuous as the supremum of a family of smooth functions 
by the proof of Lemma~\ref{lem:lower-semicontinuity}. Thus \(V_{K,\phi}\) is continuous if and only if \(V_{K,\phi}\) is upper semicontinuous if and only if $V_{K,\phi}^*\leq\phi$ on \(K\). 
\end{proof}

\subsection{Continuity of extremal functions on a compact Hermitian manifold}
\begin{proof}[Proof of Theorem~\ref{thm:characterization-c}]
(i) Let \(E\) be a subset of \(X\) and \(b\in E\).  
If \(V_E\) is continuous at \(b\), then \(V_E^*(b)=V_E(b)=0\). Conversely, \(V_E^*(b)=0\) implies $$\limsup_{z\to b}V_E(z)=0=\liminf_{z\to b}V_E(z), \quad\lim_{z\to b}V_E(z)=0=V_E(b).$$
Therefore, \(V_E\) is continuous at \(b\) if and only if \(V_E^*(b)=0\). 

If \(V_{K\cap\bar{B}(a,r)}\) is continuous at \(a\), then 
\(V_K\) is continuous at \(a\) since 
$$0\leq V_K^*(a)\leq V_{K\cap\bar{B}(a,r)}^*(a)=0, \quad V_K^*(a)=0.$$

Conversely, assume \(V_K\) is continuous at \(a\). 
It is enough to show $V_{K\cap \bar{B}(a,r)}^*(a)=0$. 
Take \(\psi\in C^{\infty}(\mathbb{R},[0,1])\) supported on \([-1,1]\) with \(\psi(0)=1\) and the 
chart \((U,f)\) of \(X\) with \(U\supset\bar{B}(a,r)=f^{-1}(\bar{\mathbb{B}}(f(a),r))\). They induce 
\(\chi_r\in C^{\infty}(X,[0,1])\) defined as
$$\chi_r(x):=\begin{cases}1-\psi(\|f(x)-f(a)\|/r),\quad x\in U,\\
1,\quad x\in X\setminus \bar{B}(a,r).\end{cases}$$ 
$\chi_r(a)=0$ and $\chi_r \equiv 1$ 
on \(X\setminus \bar{B}(a,r)\). Its \(C^1\)-norm and \(C^2\)-norm are bounded by
\[\label{eq:cutoff-fct}
	\|\chi_r\|_{C^1(X)} \leq c_1 /r, \quad \|\chi_r\|_{C^2(X)} \leq c_2 / r^2.
\]
Here, $c_1>0$ and $c_2>0$ are constants independent of $a$ and $r$,  
but they 
depend on the choice of the holomorphic coordinate chart \((U,f)\).
Accordingly, since \(\omega>0\), \(-dd^c\chi_r\geq-(c_2'/r^2)\omega\) for some constant \(c_2'\geq1\) depending only on 
\((U,f)\). Then for \(\varepsilon_r:=\min\{{r^2}/{c_2'},{1}\}/2\), the function \(-\varepsilon_r \chi_r\) belongs to \(PSH(X, \omega/2)\).

Let \(v\in PSH(X,\omega)\), \(v\leq1\) and \(v\leq 0\) on \(K\cap\bar{B}(a,r)\). By the choice of \(\chi_r\) and \(\varepsilon_r\), \(\varepsilon_r(v-\chi_r)\leq0\) on \(K\) and \(\varepsilon_r+1/2\leq 1\). These imply \(\varepsilon_r(v-\chi_r)\leq V_K\). Accordingly, 
\[\label{eq:basic-ineq}
\varepsilon_r  h^*_{K\cap  \bar{B}(a,r)} - \varepsilon_r \chi_r  \leq V_{K}^*.\]
\eqref{eq:basic-ineq} and $V_K^*(a) =0=\chi_r(a)$ 
give $h_{K\cap  \bar{B}(a,r)}^*(a) =0$, $h_{K\cap  \bar{B}(a,r)}^*\not\equiv1$. 
By Lemma~\ref{lem:BTcap_globalandlocal}, $K \cap \bar{B}(a,r)$ is not pluripolar and $Cap_{\omega}(K \cap \bar{B}(a,r))>0$. By \eqref{eq:cap-comparison}, 
$$\sup_X V_{K\cap  \bar{B}(a,r)}^*\leq A/{Cap_{\omega}(K \cap \bar{B}(a,r))} < \infty.$$
By \eqref{eq:zero-one-function}, 
$V_{K\cap \bar{B}(a,r)}^*(a) \leq \sup_X V_{K\cap  \bar{B}(a,r)}^* h^*_{K\cap \bar{B}(a,r)}(a)$. Therefore, $V_{K\cap \bar{B}(a,r)}^*(a)=0.$ 

(ii) By Lemma~\ref{lem:weight-vs-unweight}, to prove the continuity of \(V_{K,\phi}\), it is enough to show $V_{K,\phi}^*\leq \phi$ on \(K\). Let $b\in K$ and $\varepsilon'>0$. We have $\phi|_{\bar{B}(b,r)} \leq \phi(b) + \varepsilon'$ for small $r>0$. By \eqref{eq:monotonicity},
$$V_{K,\phi}\leq V_{K\cap \bar{B}(b,r),\phi}\leq V_{K\cap \bar{B}(b,r), \phi(b)+ \varepsilon'} = \phi(b)+\varepsilon' + V_{K\cap \bar{B}(b,r)}. 
$$
Since \(V_K\) is assumed to be continuous,
\(V_{K\cap \bar{B}(b,r)}^*(b)=0\) by Theorem~\ref{thm:characterization-c}-(i). Thus, $V_{K,\phi}^*(b) \leq \phi(b)+\veps'$. Letting $\varepsilon'\to0^+$ gives $V_{K,\phi}^* (b)\leq \phi (b)$. Since 
\(b\in K\) was arbitrary, $V_{K,\phi}^*\leq \phi$ on \(K\).
\end{proof}

We have a partial converse of Theorem~\ref{thm:characterization-c}-(ii) with appropriately scaled quasi-psh 
continuous weight functions on compact Hermitian manifolds. Nguyen in \cite{Ng24} proved this partial converse on compact K\"ahler manifolds.
\begin{cor}\label{cor:loc-w-c} Let \(K\) be a compact subset of \(X\) and $\phi$ be a quasi-psh continuous real-valued function on \(X\) with constants \(c>0\) and \(c'>0\) such that \(c\omega+dd^c\phi\geq c'\omega\) as currents. If \(V_{K,\phi/c}\) is continuous. Then $V_K$ is continuous. In particular, if \(\psi:X\to\mathbb{R}\) is smooth with a constant \(c''>0\) such that \(c''\omega+dd^c\psi>0\) and \(V_{K,\psi/c''}\) is continuous, then \(V_K\) is continuous.
\end{cor}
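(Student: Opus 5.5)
By Corollary~\ref{cor:semicontinuity}, it is enough to show that \(V_K^*(a)=0\) for every \(a\in K\). The plan is to convert the continuity of the weighted extremal function into a competitor for \(V_K\), using the strict positivity \(c\omega+dd^c\phi\geq c'\omega\).

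\textbf{Step 1: rescaling.} Put \(\varphi:=\phi/c\). Then \(\omega+dd^c\varphi\geq (c'/c)\omega\). By Lemma~\ref{lem:weight-vs-unweight}-(iii), applied with the continuous (hence lower semicontinuous) weight \(\varphi\), the continuity of \(V_{K,\varphi}\) gives
\[
V_{K,\varphi}=V_{K,\varphi}^*\in PSH(X,\omega) \quad\text{and}\quad V_{K,\varphi}\leq\varphi \text{ on } K.
\]
Also \(\varphi\) itself is a competitor for \(V_{K,\varphi}\), so \(V_{K,\varphi}\geq\varphi\) on \(X\). Hence \(V_{K,\varphi}=\varphi\) on \(K\).

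\textbf{Step 2: building a competitor for \(V_K\).} Set \(\lambda:=c'/c\in(0,1]\); we may assume \(c'\leq c\) by shrinking \(c'\). Consider
\[
u:=\lambda^{-1}\bigl(V_{K,\varphi}-\varphi\bigr).
\]
We need \(u\in PSH(X,\omega)\), and for this we compute
\[
\omega+dd^cu=\lambda^{-1}\bigl(\lambda\omega+dd^cV_{K,\varphi}-dd^c\varphi\bigr).
\]
The problem is the term \(-dd^c\varphi\), which is only bounded below by \(-(C)\omega\) rather than by \(-\omega\). The convex-combination trick fixes this. For \(t\in(0,1)\), the function \(tV_{K,\varphi}+(1-t)\varphi\) satisfies
\[
\omega+dd^c\bigl(tV_{K,\varphi}+(1-t)\varphi\bigr)\geq (1-t)\lambda\omega.
\]
It does not compare well with \(\varphi\) directly, however. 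A cleaner route is to write \(V_{K,\varphi}-\varphi=V_{\theta;K}\) with \(\theta=\omega+dd^c\varphi\geq\lambda\omega\), using Lemma~\ref{lem:weight-vs-unweight}-(ii). Continuity of \(V_{K,\varphi}\) and of \(\varphi\) then gives continuity of \(V_{\theta;K}\), with \(V_{\theta;K}=0\) on \(K\). Now \(\theta\) is a continuous positive \((1,1)\)-current dominating \(\lambda\omega\), and by compactness also \(\theta\leq C\omega\) if \(\phi\) has bounded Laplacian. In general \(\phi\) is only continuous quasi-psh, so I would instead use only the lower bound \(\theta\geq\lambda\omega\). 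Then \(PSH(X,\lambda\omega)\subset PSH(X,\theta)\), and hence
\[
\lambda V_{\omega;K}=V_{\lambda\omega;K}\leq V_{\theta;K}.
\]
This is the useful direction: it gives
\[
0\leq V_K^*\leq\lambda^{-1}V_{\theta;K}^* = \lambda^{-1}\bigl(V_{K,\varphi}-\varphi\bigr)^*.
\]
The upper regularization of the difference equals the difference, since both functions are continuous, and this vanishes on \(K\) by Step 1. Therefore \(V_K^*=0\) on \(K\), and Corollary~\ref{cor:semicontinuity} concludes that \(V_K\) is continuous.

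\textbf{Step 3: the smooth case.} For smooth \(\psi\) with \(c''\omega+dd^c\psi>0\), compactness of \(X\) yields \(c'>0\) with \(c''\omega+dd^c\psi\geq c'\omega\). This reduces the second statement to the first.

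\textbf{Main obstacle.} The delicate point is the identity \(V_{\lambda\omega;K}=\lambda V_{\omega;K}\) together with the inclusion \(PSH(X,\lambda\omega)\subset PSH(X,\theta)\) when \(\theta\) is only a current (\(\phi\) merely continuous quasi-psh). One has to check that \(v+\varphi\) is quasi-psh with \(\omega+dd^c(v+\varphi)\geq0\) for \(v\in PSH(X,\lambda\omega)\). This holds because sums of quasi-psh functions are quasi-psh and the current inequalities add. The only other care needed is that \(V_{\theta;K}\) is continuous, which follows directly from Lemma~\ref{lem:weight-vs-unweight}-(ii).
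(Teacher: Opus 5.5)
Your proof is correct, and it takes a different and much shorter route than the paper's.

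\textbf{The paper's route.} It sets $\theta:=c\omega+dd^c\phi$ and rebuilds the capacity machinery for $\theta$: finiteness of $Cap_\theta$, comparability with local Bedford--Taylor capacities, and the bound $\sup_X V_{\theta;F}\leq A_\theta/Cap_\theta(F)$. This transplants Theorem~\ref{thm:characterization-c}-(i) to $PSH(X,\theta)$. It then reruns the proof of Theorem~\ref{thm:characterization-c}-(ii) inside $PSH(X,\theta)$ to get continuity of the weighted function $V_{\theta;K,-\phi}=V_{c\omega;K}-\phi$. It finishes with Remark~\ref{rmk:regularity-c}.

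\textbf{Your route.} You use the hypothesis $\theta\geq c'\omega$ only as a comparison of forms. It gives $PSH(X,c'\omega)\subset PSH(X,\theta)$, hence $c'V_K\leq V_{\theta;K}=cV_{K,\phi/c}-\phi$. The right-hand side is continuous and $\leq 0$ on $K$, so $V_K^*=0$ on $K$. This is exactly the mechanism of Remark~\ref{rmk:regularity-c}, and you notice it still works when the larger form is only a current.

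\textbf{What each buys.} Your argument needs no capacity theory, no localization and no weighted-continuity result. It also handles a merely continuous $\phi$ as easily as the smooth case, where the paper itself remarks that a shortcut exists. The paper's route does produce, as by-products, the $\theta$-analogues of both parts of Theorem~\ref{thm:characterization-c}. Those are of independent interest but are not needed for this corollary.

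\textbf{Minor streamlining.}
\begin{itemize}
\item You can skip $V_{\theta;K}$ and the full identity of Lemma~\ref{lem:weight-vs-unweight}-(ii). Take $v\in PSH(X,\omega)$ with $v|_K\leq 0$ and put $w:=\lambda v+\varphi$. Then $\omega+dd^c w\geq\lambda(\omega+dd^c v)\geq 0$ and $w\leq\varphi$ on $K$, so $\lambda V_K+\varphi\leq V_{K,\varphi}$ directly. This uses only the easy direction (adding the continuous quasi-psh function $\varphi$). It avoids asking whether $w-\varphi$ is quasi-psh for merely continuous $\varphi$.
\item Your initial worry in Step~2 about whether $u=\lambda^{-1}(V_{K,\varphi}-\varphi)$ is $\omega$-psh can be dropped. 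You only need $V_K\leq u$ with $u$ continuous.
\item The normalization $c'\leq c$ is not needed.
\end{itemize}
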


\begin{proof} Let $\theta:= c\omega + dd^c \phi \geq c'\omega$ 
as currents. 
Define $Cap_\theta(\cdot)
$ as \(Cap_{\omega}(\cdot)\), using \(\theta\) instead of \(\omega\). Let \(\|\phi\|_X\) be the supremum norm of \(\phi\) on \(X\). Finiteness of \(Cap_{cc'\omega/(1+2\|\phi\|_X)}(X)\) implies finiteness of \(Cap_{\theta}(X)\).
For any locally defined smooth potential functions \(\rho_1\) and \(\rho_2\) satisfying \(dd^c\rho_1\leq \omega\leq dd^c\rho_2\), we have \(dd^c (c'\rho_1)\leq\theta\leq dd^c(c\rho_2+\phi)\) as currents. Accordingly, not only $Cap_\omega(\cdot)$ but also $Cap_\theta(\cdot)$ 
is bi-Lipschitz equivalent to \(\sum_{j=1}^NCap(\cdot\cap U_{j,\delta},U_j)\) by the same proof used for Lemma~\ref{lem:BTcap_globalandlocal}. The continuous local potential \(c\rho_2+\phi\) with \(\theta\leq dd^c(c\rho_2+\phi)\) and the boundedness of \(\phi\) 
implies that \(\theta^n(X)\) is finite and 
\(\{\psi\in PSH(X,\theta):\sup_X\psi=0\}\) is bounded in \(L^1(X,\theta^n)\), by the same reasoning for a smooth Hermitian case. 
Then by the same proof used for the second inequality in \eqref{eq:cap-comparison}, there exists a uniform  constant \(A_\theta>0\) such that, for each non-pluripolar compact set \(F_1\) in \(X\),
$$\sup_X V_{\theta;F_1}\leq \frac{A_\theta}{Cap_{\theta}(F_1)}.$$

By Lemma~\ref{lem:BTcap_globalandlocal} and the comparability of \(Cap_{\omega}(\cdot)\) and \(Cap_\theta(\cdot)\), 
for each compact set \(F_2\) in \(X\), we know that 
\(F_2\) is non-pluripolar if and only if \(Cap_{\theta}(F_2)\in(0,\infty)\).
Therefore, the proof of Theorem~\ref{thm:characterization-c}-(i) is true in $PSH(X,\theta)$. In other words, for each \(a\in K\) and 
for each closed holomorphic coordinate ball \(\bar{B}(a,r)\) in \(X\) centered at \(a\) of radius \(r\in(0,\infty)\),
$V_{\theta; K}$ is continuous at $a$ if and only if $V_{\theta;K\cap \bar{B}(a,r)}$ is continuous at \(a\). 

By Lemma~\ref{lem:weight-vs-unweight}-(ii), $V_{K,{\phi}/{c}}=c^{-1}(V_{\theta;K} + \phi)$, 
so $V_{\theta;K}$ is continuous if and only if $V_{K,\phi/c}$ is continuous. 
Consequently, 
$V_{\theta;K}$ is continuous.
Then for each \(a\in K\) and small \(r>0\) and closed holomorphic coordinate ball \(\bar{B}(a,r)\), $V_{\theta;K\cap \bar{B}(a,r)}$ is continuous at $a$. This enables us to use the same proof used for Theorem~\ref{thm:characterization-c}-(ii) to get the continuity of $V_{\theta;K, -\phi}$. Also, \(c\omega=\theta+dd^c(-\phi)\) gives
$V_{\theta;K,-\phi} = V_{c\omega; K} -\phi$. 
Therefore, $V_{c\omega;K}$ is also continuous.
By Remark~\ref{rmk:regularity-c}, $V_{K}=V_{\omega;K}$ is continuous.
\end{proof}

In fact, in Corollary~\ref{cor:loc-w-c}, if the weight function \(\psi:X\to\mathbb{R}\) is smooth, the last paragraph of its proof is enough for the continuity of \(V_K\) since \(dd^c\psi+c''\omega\) becomes another Hermitian metric on \(X\).

\begin{proof}[Proof of Theorem~\ref{thm:characterization-c-c}] 
(ii) implies (iii) by the definitions.

To show that (i) implies (ii), suppose \(V_K\) is continuous at \(a\in K\). Fix 
\(R\in(0,\infty)\) and 
a holomorphic coordinate ball \((\Omega,\tau)\) in \(X\) centered at \(a\) of radius \(R\). We can take a non-negative bounded smooth strictly psh function \(\rho_1\) on \(\Omega_s:=\tau^{-1}(\mathbb{B}(\mathbf{0},2R/3))\) with \(dd^c\rho_1\leq \omega\), \(\rho_1(a)=0\), \(\liminf_{x\to\partial \Omega_s}\rho_1(x)>0\) as \(\Omega_s\Subset\Omega\).  Let \(0<r\leq R/3\). By Lemma~\ref{lem:equivalence-notions}, since \(K\cap\tau^{-1}(\bar{\mathbb{B}}(\mathbf{0},r))\subset\Omega_s\), 
for the constant \(m:=\liminf_{x\to\partial\Omega_s}\rho_1(x)/{(1+\|\rho_1\|_{\Omega_s})}\in(0,\infty)\) where \(\|\cdot\|_{\Omega_s}\) denotes the supremum norm on \(\Omega_s\), we have 
$$\begin{aligned}
0&\leq m u_{\tau(K\cap\tau^{-1}(\bar{\mathbb{B}}(\mathbf{0},r))),\rho_1\circ\tau^{-1};
\tau(\Omega_s)}(z)\\
&\leq(V_{K\cap\tau^{-1}(\bar{\mathbb{B}}(\mathbf{0},r))}+\rho_1)\circ \tau^{-1}(z),\quad z\in
\tau(\Omega_s).\end{aligned}$$
The set 
\(\tau(\Omega_s)\) 
is the domain of \(u_{\tau(K\cap\tau^{-1}(\bar{\mathbb{B}}(\mathbf{0},r))),\rho_1\circ\tau^{-1};
\tau(\Omega_s)}\). 
By the inequality~\eqref{eq:re-ext-compare-c}, 
$$\begin{aligned}0&\leq u_{\tau(K\cap\tau^{-1}(\bar{\mathbb{B}}(\mathbf{0},r)));
\tau(\Omega_s)}(z)+
\inf_{\Omega_s}\rho_1\\
&\leq u_{\tau(K\cap\tau^{-1}(\bar{\mathbb{B}}(\mathbf{0},r))),\rho_1\circ\tau^{-1};
\tau(\Omega_s)}(z),\quad z\in\tau(\Omega_s).\end{aligned}$$

Our assumption and (i) give \(V_{K\cap\tau^{-1}(\bar{\mathbb{B}}(\mathbf{0},r))}^*(a)=0\). \(\rho_1^*(a)=0\). Then putting \(z=\mathbf{0}\) into the upper semicontinuous regularization of the first inequality gives $$u_{\tau(K\cap\tau^{-1}(\bar{\mathbb{B}}(\mathbf{0},r))),\rho_1\circ\tau^{-1};
\tau(\Omega_s)}^*(\mathbf{0})=0.$$
\(\rho_1(\Omega_s)\) has infimum \(0\), thus
putting \(z=\mathbf{0}\) to the regularization of the second inequality gives \(u_{\tau(K\cap\tau^{-1}(\bar{\mathbb{B}}(\mathbf{0},r)));
\tau(\Omega_s)}^*(\mathbf{0})=0\). 
\(V_{K\cap\tau^{-1}(\bar{\mathbb{B}}(\mathbf{0},r))}^*(a)=0\), so
\(
\tau(K\cap\tau^{-1}(\bar{\mathbb{B}}(\mathbf{0},r)))\) is not pluripolar by Proposition~\ref{prop:pluripolarextremal}. 
Accordingly,  
\(L_{\tau(K\cap\tau^{-1}(\bar{\mathbb{B}}(\mathbf{0},r)))}^*\) belongs to \(\mathcal{L}\) and  
\(\sup_{\tau(\Omega_s)}L_{\tau(K\cap\tau^{-1}(\bar{\mathbb{B}}(\mathbf{0},r)))}^*\) is finite.
Then by the inequality~\eqref{eq:compare-C}, 
$$\begin{aligned}0&\leq 
L_{\tau(K\cap\tau^{-1}(\bar{\mathbb{B}}(\mathbf{0},r)))}^*(\mathbf{0})\\&\leq
(\sup_{\tau(\Omega_s)}L_{\tau(K\cap\tau^{-1}(\bar{\mathbb{B}}(\mathbf{0},r)))}^*)\times
u_{\tau(K\cap\tau^{-1}(\bar{\mathbb{B}}(\mathbf{0},r)));
\tau(\Omega_s)}^*(\mathbf{0})\\&=0.\end{aligned}$$
Thus \(L_{\tau(K\cap\tau^{-1}(\bar{\mathbb{B}}(\mathbf{0},r)))}\) is continuous at \(\mathbf{0}\). Since \(r\in(0,R/3]\) was arbitrary, \(\tau(K\cap\tau^{-1}(\bar{\mathbb{B}}(\mathbf{0},R/2)))\) is locally \(L\)-regular at \(\mathbf{0}\).
Also, \(R\in(0,\infty)\) and \((\Omega,\tau)\) were arbitrary. Therefore, by Remark~\ref{rmk:loc-L-reg-mfd},
\(K\) is locally \(L\)-regular at \(a\).

To show that (iii) implies (i), assume \(K\) is weakly locally \(L\)-regular at \(a\). By Remark~\ref{rmk:loc-L-reg-mfd}, for some holomorphic coordinate ball \((\Omega_0,\tau_0)\) in \(X\) centered at \(a\) of finite radius \(R_0\), the set \(\tau_0(K\cap\tau_0^{-1}(\bar{\mathbb{B}}(\mathbf{0},R_0/2)))\) is locally \(L\)-regular at \(\mathbf{0}\).
Take a non-negative bounded smooth strictly psh function \(\rho_2\) on \(\Omega_{0s}:=\tau_0^{-1}(\mathbb{B}(\mathbf{0},2R_0/3))\) with \(\omega\leq dd^c\rho_2\) and \(\rho_2(a)=0\). 
Let \(\Omega_{0ss}:=\tau_0^{-1}(\mathbb{B}(\mathbf{0},R_0/2))\) and \(K_0:=K\cap\overline{\Omega_{0ss}}\). 
By the assumption, \(\tau_0(K_0)\) is \(L\)-regular at \(\mathbf{0}\), or equivalently, 
\(L_{\tau_0(K_0)}^*(\mathbf{0})=0\). 
Thus \(
K_0\) is not pluripolar, and then 
\(\|V_{K_0}\|_{\Omega_{0s}}\) is finite by Proposition~\ref{prop:pluripolarextremal}.
By Lemma~\ref{lem:equivalence-notions}, for \(\ M:=\|V_{K_0}\|_{\Omega_{0s}}+\|\rho_2\|_{\Omega_{0s}}+1<\infty\), we have 
$$0\leq(V_{K_0}+\rho_2)\circ\tau_0^{-1}(z)\leq Mu_{\tau_0(K_0),\rho_2\circ\tau_0^{-1};
\tau_0(\Omega_{0s})}(z), \quad z\in
\tau_0(\Omega_{0s}).$$
Let \(0<r\leq R_0/3\). By the monotonicity~\eqref{eq:re-ext-monotonicity} and inequality~\eqref{eq:re-ext-compare-c},
for \(z\in \tau_0(\Omega_{0s})\), 
$$\begin{aligned}
0&\leq u_{\tau_0(K_0),\rho_2\circ\tau_0^{-1};
\tau_0(\Omega_{0s})}(z)\\
&\leq u_{\tau_0(K_0)\cap\bar{\mathbb{B}}(\mathbf{0},r),\rho_2\circ\tau_0^{-1};
\tau_0(\Omega_{0s})}(z)\\
&\leq (1+\|\rho_2\|_{\Omega_{0s}})u_{\tau_0(K_0)\cap\bar{\mathbb{B}}(\mathbf{0},r);
\tau_0(\Omega_{0s})}(z)+\sup(\rho_2(K_0\cap\tau_0^{-1}(\bar{\mathbb{B}}(\mathbf{0},r)))).
\end{aligned}$$

By assumption,
\(L_{\tau_0(K_0)\cap\bar{\mathbb{B}}(\mathbf{0},t)}^*(\mathbf{0})=0\), \(0<t\leq R_0/3\). Then \(\tau_0(K_0)\cap\bar{\mathbb{B}}(\mathbf{0},r)\) is not pluripolar, and the inequality of comparability~\eqref{eq:compare-C} gives
$$0\leq
u_{\tau_0(K_0)\cap\bar{\mathbb{B}}(\mathbf{0},r);
\tau_0(\Omega_{0s})}^*(\mathbf{0})\leq\frac{L_{\tau_0(K_0)\cap\bar{\mathbb{B}}(\mathbf{0},r)}^*(\mathbf{0})}{\inf_{\partial(\tau_0(\Omega_{0s}))}L_{\tau_0(K_0)\cap\bar{\mathbb{B}}(\mathbf{0},r)}}=0.$$
$$\Big{(}\begin{aligned}&L_{\tau_0(K_0)\cap\bar{\mathbb{B}}(\mathbf{0},r)}(y)\geq L_{\bar{\mathbb{B}}(\mathbf{0},r)}(y)=\log^+{\frac{\|y\|}{r}},\\
&\inf_{\partial(\tau_0(\Omega_{0s}))} L_{\tau_0(K_0)\cap\bar{\mathbb{B}}(\mathbf{0},r)}\geq \log{2}>0.\end{aligned}\Big{)}$$
Putting \(z=\mathbf{0}\) into the regularization of the second three-line inequalities gives $$0\leq u_{\tau_0(K_0),\rho_2\circ\tau_0^{-1};
\tau_0(\Omega_{0s})}^*(\mathbf{0})\leq\sup(\rho_2(K\cap\tau_0^{-1}(\bar{\mathbb{B}}(\mathbf{0},r)))).$$ Since \(r\in(0,R_0/3]\) was arbitrary, letting \(r\to0^+\) yields $$0\leq u_{\tau_0(K_0),\rho_2\circ\tau_0^{-1};
\tau_0(\Omega_{0s})}^*(\mathbf{0})\leq\rho_2(a)=0.$$ Then putting \(z=\mathbf{0}\) into the regularization of the first inequality gives \(V_{K_0}^*(a)=0\). Therefore, 
\(V_{K}^*(a)=0\), and we have proved the continuity of \(V_K\) at \(a\in K\).
\end{proof}

\begin{cor}\label{cor:c-ext-inv-bihol}
The continuity of the extremal function of a compact set at a point in that compact set is invariant under any biholomorphism between two open sets of two compact Hermitian manifolds whose domain contains the point. 

In particular, the continuity of the extremal function of a compact set is invariant under any local biholomorphism between two open sets of two compact Hermitian manifolds whose domain contains the compact set. 
\end{cor}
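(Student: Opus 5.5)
The plan is to reduce everything to Theorem~\ref{thm:characterization-c-c}, which characterizes continuity of \(V_K\) at \(a\in K\) purely through the local chart-level notion of (weak) local \(L\)-regularity at \(a\). The metric \(\omega\) and the ambient compact manifold do not appear in that characterization. Let \(X,X'\) be compact Hermitian manifolds, \(U\subset X\) and \(U'\subset X'\) open, and \(\Phi:U\to U'\) a biholomorphism. Let \(K\subset X\) be compact with \(a\in K\cap U\), and let \(K'\subset X'\) be compact with \(\Phi(K\cap W)=K'\cap\Phi(W)\) for some open \(W\) with \(a\in W\subset U\). The natural reading is that \(K'=\Phi(K)\) when \(K\subset U\). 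More generally, it suffices that \(K\) and \(K'\) correspond near \(a\) and \(\Phi(a)\), and I will use only that local correspondence.

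\textbf{Step 1.} Assume \(V_K\) is continuous at \(a\). By Theorem~\ref{thm:characterization-c-c}, (i)\(\Rightarrow\)(ii), \(K\) is locally \(L\)-regular at \(a\). Hence, for \emph{every} holomorphic chart \((O,g)\) of \(X\) containing \(a\), the set \(g(K\cap O)\) is locally \(L\)-regular at \(g(a)\).

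\textbf{Step 2.} Choose a holomorphic chart \((O',g')\) of \(X'\) with \(\Phi(a)\in O'\subset\Phi(W)\). Then \((O,g):=(\Phi^{-1}(O'),\,g'\circ\Phi)\) is a holomorphic chart of \(X\) containing \(a\), and
\[
g(K\cap O)=g'(\Phi(K\cap O))=g'(K'\cap O').
\]
By Step~1, this set is locally \(L\)-regular at \(g(a)=g'(\Phi(a))\). Therefore \(K'\) is weakly locally \(L\)-regular at \(\Phi(a)\), in the chart form of Definition~\ref{defn:loc-L-reg-mfd}(ii).

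\textbf{Step 3.} Theorem~\ref{thm:characterization-c-c}, (iii)\(\Rightarrow\)(i), applied on \(X'\), gives continuity of \(V_{K'}\) at \(\Phi(a)\). Applying the same argument to \(\Phi^{-1}\) gives the converse implication. Note that the metrics on \(X\) and \(X'\) play no role, consistent with Remark~\ref{rmk:regularity-c}.

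\textbf{Step 4 (second statement).} Let \(\Phi\) be a local biholomorphism defined on an open set containing \(K\), and suppose the image compact set is \(\Phi(K)\). Around each \(a\in K\), \(\Phi\) restricts to a biholomorphism of a neighborhood of \(a\), and \(\Phi(K)\) agrees locally near \(\Phi(a)\) with the image of \(K\cap W\). The first part then gives continuity of \(V_{\Phi(K)}\) at \(\Phi(a)\). Every point of \(\Phi(K)\) arises this way, so \(V_{\Phi(K)}\) is continuous on \(\Phi(K)\), and hence everywhere by Corollary~\ref{cor:semicontinuity}.

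The main obstacle is the converse direction in Step~4, which is not literally symmetric. Points of \(\Phi(K)\) may have several preimages, and near \(\Phi(a)\) the set \(\Phi(K)\) may be larger than \(\Phi(K\cap W)\). I would handle this with Theorem~\ref{thm:characterization-c}(i) and monotonicity. Adding extra pieces of compact set only lowers the localized extremal function, so \(V_{\Phi(K)\cap \bar B}^*\) at \(\Phi(a)\) is at most \(V^*_{\Phi(K\cap W)\cap\bar B}\) there, which vanishes. This gives the direction \(K\Rightarrow\Phi(K)\). For the reverse direction, I would restrict attention to injective neighborhoods.
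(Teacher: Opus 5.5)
Your proposal is correct and follows essentially the paper's proof. It uses Theorem~\ref{thm:characterization-c-c} (i)\(\Rightarrow\)(ii) at \(a\), pulls a chart at \(\Phi(a)\) back by \(\Phi\) to a chart at \(a\) to get weak local \(L\)-regularity of the image at \(\Phi(a)\), and then applies (iii)\(\Rightarrow\)(i). The paper works with the compact set \(\Phi(K\cap S)\) for a compact neighbourhood \(S\subset U\) of \(a\), which is a special case of your local-correspondence hypothesis. It then passes to larger image sets by plain monotonicity \(V_{\Phi(K)}\le V_{\Phi(K\cap S)}\); this is all your Step~4 needs, and the appeal to Theorem~\ref{thm:characterization-c}(i) there is unnecessary.

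Do drop the hoped-for reverse direction in Step~4 for non-injective \(\Phi\), because it is false in general. On a complex torus take \(\Phi(x)=2x\), and let \(K=\{a\}\cup D\) with \(D\) a small closed coordinate ball centred at another preimage of \(\Phi(a)\). Then \(\Phi(K)=\Phi(D)\) is a closed coordinate ball, so \(V_{\Phi(K)}\) is continuous, while \(V_K\) is discontinuous at the isolated point \(a\). The paper's proof only establishes the forward direction there.
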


\begin{proof}
Let \((Y,\omega_Y)\), \((Z,\omega_Z)\) be two compact Hermitian manifolds and let \(T\subset Y\) be a compact subset. 
Let \(b\in T\).
Let \(U_Y\) be an open neighborhood of \(b\) in \(Y\) and let \(U_Z\) be an open subset of \(Z\). Let \(\Phi:U_Y\mapsto U_Z\) be a biholomorphic map. 


Suppose the extremal function \(V_{\omega_Y;T}\) is continuous at \(b\). 
By Theorem~\ref{thm:characterization-c}-(iii), \(T\) is locally \(L\)-regular at \(b\).
Then for some holomorphic coordinate ball \((\Omega_Z,\tau_Z)\) in \(Z\) centered at \(\Phi(b)\in Z\) of small radius with
\(\Omega_Z\subset U_Z\),
\((\Phi^{-1}(\Omega_Z),\tau_{Z}\circ\Phi)\) is a holomorphic coordinate ball in \(Y\) centered at \(b\in T\), so we get local \(L\)-regularity of
$$(\tau_Z\circ\Phi)(T\cap\Phi^{-1}(\Omega_Z))=\tau_Z(\Phi(T\cap U_Y)\cap\Omega_Z)\quad\text{at}\quad\tau_Z(\Phi(b))=\mathbf{0}\in\mathbb{C}^n.$$ 
There exists some compact neighborhood \(S_b\subset U_Y\) of \(b\). Then \(\tau_Z(\Phi(T\cap S_b)\cap\Omega_Z)\) is also locally \(L\)-regular at \(\mathbf{0}\) since its intersection with \(\bar{\mathbb{B}}(\mathbf{0},r)\) is equal to the intersection of \(\tau_Z(\Phi(T\cap U_Y)\cap\Omega_Z)\) and \(\bar{\mathbb{B}}(\mathbf{0},r)\) for each small \(r>0\). 

By Theorem~\ref{thm:characterization-c}-(iii), weak local \(L\)-regularity at a point in a compact set is equivalent to continuity of extremal function of the set at the point. Thus 
\(V_{\omega_Z;\Phi(T\cap S_b)}\) 
is continuous at \(\Phi(b)\).
By monotonicity, 
\(V_{\omega_Z;\Phi(K\cap U_Y)}\) 
is continuous at \(\Phi(b)\).
\end{proof}

\begin{expl}
Let \(\mathbb{CP}^n\) be the complex projective space of complex dimension \(n\). Let \(K:=\{[z_0:\cdots:z_n]\in\mathbb{CP}^n:\sum_{1\leq j\leq n}|z_j|\leq |z_0|\}\) be a compact subset of \(\mathbb{CP}^n\). Let \((U_j,f_j)\) be the standard coordinate charts for \(\mathbb{CP}^n\) as $$U_j:=\{z_j\neq 0\},\quad f_j([z_0:\cdots:z_n]):=(\frac{z_0}{z_j},\dots,\frac{z_{j-1}}{z_j},\frac{z_{j+1}}{z_j},\cdots,\frac{z_n}{z_j}).$$
Let \(F\) be either the set \(\{w\in\mathbb{C}^n:\sum_{l}|w_l|\leq1\}\) or the set \(\{w\in\mathbb{C}^n:1+\sum_{2\leq l}|w_l|\leq|w_1|\}\). 
For each point \(a\in F\) and for each open neighborhood \(O\) of \(a\), there exists an invertible affine automorphism \(\Phi\) 
of \(\mathbb{C}^n\) such that \(\Phi([0,1]^n)\) is contained in \(F\cap O\) and has \(a\) as its vertex. Since the extremal function 
of \([0,1]^n\) is continuous by \cite[Corollary~5.4.5]{Kl91}, the extremal function 
of \(\Phi([0,1]^n)\) is continuous by the relation
$$L_{\Phi([0,1]^n)}=L_{[0,1]^n}\circ\Phi^{-1}.$$
Therefore, for each \(0\leq j\leq n\), \(f_j(K\cap U_j)\) is locally \(L\)-regular. This means that \(K\) is weakly locally \(L\)-regular. By Theorem~\ref{thm:characterization-c}, \(V_{\omega;K}\) is continuous for each Hermitian metric \(\omega\) on \(\mathbb{CP}^n\). 
\end{expl}

\begin{proof}[Proof of Theorem~\ref{thm:L-reg at starcenter implies loc L-reg}] 
Since \(b\in F\) is a star center of \(F\), for each \(t\in (0,1]\), the set \(b+(F-b)/t\) contains \(F\). Since \(F\) is compact, there exists \(\epsilon\in(0,1)\) such that \(\bar{\mathbb{B}}(b,1/\epsilon)\) contains \(F\). Then for each \(0<r\leq1\), we have
$$L_{F\cap\bar{\mathbb{B}}(b,r)}(z)= L_{(b+\frac{F-b}{r\epsilon})\cap\bar{\mathbb{B}}(b,\frac{1}{\epsilon})}(b+\frac{z-b}{r\epsilon})\leq L_F(b+\frac{z-b}{r\epsilon}), \quad z\in\mathbb{C}^n.$$
The \(L\)-regularity of \(F\) at \(b\) means \(L_F^*(b)=0\). Therefore, for each \(r\in (0,1]\), we have \(L_{F\cap\bar{\mathbb{B}}(b,r)}^*(b)=0\). In other words, \(F\) is locally \(L\)-regular at \(b\). 

If \(F\) is convex, then every point in \(F\) is a star center of \(F\). 
\end{proof}

\section{Appendix}
\label{sec:appendix}
In this section, we give the relation between extremal functions on \(X\) and locally defined weighted relative extremal functions in \(\mathbb{C}^n\). Nguyen in \cite{Ng24} proved this relation for a compact K\"ahler manifold, and we adjust his proof for a compact Hermitian manifold. 

Let \(a\in X\). There is a holomorphic coordinate ball $(\Omega, \tau)$ in \(X\) centered at \(a\), which is the relatively compact restricted chart in another holomorphic chart (if that bigger chart is \((U,f)\), \(U\) is open in \(X\), \(f\) is biholomorphic from \(U\) onto an open set in \(\mathbb{C}^n\), \(\Omega\Subset U,f|_\Omega=\tau\)): \(\Omega\) is open in \(X\), \(\tau\) is biholomorphic with
\[\label{eq:c-ball}\tau: \Omega \to \mathbb{B}:=\mathbb{B}(\mathbf{0},1) \subset \mathbb{C}^n, \quad \tau(\Omega)=\mathbb{B},\quad \tau(a)=\mathbf{0}.\] 
There exist non-negative bounded smooth strictly psh functions $\rho_1,\rho_2\in PSH(\Omega) \cap C^\infty(\Omega)\cap L^{\infty}(\Omega)$ 
such that \(\liminf_{x\to\partial\Omega}\rho_1(x)>0\), 
\(\rho_1(a)=\rho_2(a)=0\) and 
$dd^c\rho_1\leq\omega \leq dd^c \rho_2$.
(\(\rho_j(x):=A_j|\tau(x)|^2,\;x\in\Omega\), for some \(A_j\in(0,\infty)\) are such functions, as \(f|_{\Omega}=\tau\), \(\Omega\Subset U\).)  

For $E\Subset \mathbb{B}$, the (zero-one) relative extremal function of \(E\) on \(\mathbb{B}\) is 
\[\label{eq:zero-one-C}\notag
	u_{E}(z) =u_{E;\mathbb{B}}(z):= \sup \left\{ v(z) : v\in PSH(\mathbb{B}), v|_E \leq 0, v\leq  1 \right\},\quad
z\in\mathbb{B}.\]
\cite[Proposition~5.3.3]{Kl91} gives a relation of \(L_F\) and \(u_F\) for a compact non-pluripolar set \(F\) in \(\mathbb{B}\) with \(C_1:=\inf_{\partial\mathbb{B}} L_{F}\in (0,\infty)\) and \(C_2:=\sup_{\mathbb{B}} L_F^*=\sup_{\mathbb{B}}L_F\in (0,\infty)\) as
\[\label{eq:compare-C}
	C_1 \, u_F(z) \leq L_F(z) \leq C_2 u_F(z), \quad{z\in\mathbb{B}}.\]
For a bounded function \(\phi:\mathbb{B}\to\mathbb{R}\),
the weighted relative extremal function of \((E,\phi)\) on \(\mathbb{B}\) is
\[\label{eq:zero-one-CW}\notag
	u_{E,\phi} (z) =u_{E,\phi;\mathbb{B}}(z):=\sup \{ v(z) : v\in PSH(\mathbb{B}), \, v|_E \leq \phi|_E, v\leq  \phi+ 1 \},\quad z\in\mathbb{B}.\]
Like Lemma~\ref{lem:weight-vs-unweight}-(i),
by the definitions, (\(\|\cdot\|_{\mathbb{B}}\) denotes the supremum norm on \(\mathbb{B}\))
\[\label{eq:re-ext-compare-c}
	u_{E} +\inf_{\mathbb{B}}\phi \leq u_{E,\phi} \leq  (1+\|\phi\|_{\mathbb{B}})u_E + \sup_E \phi  \quad\text{when } \phi\geq0.\]
Like \eqref{eq:monotonicity}, for $E_1\subset E_2\Subset \mathbb{B}$ and $-\infty<-\|\phi_1\|_{\mathbb{B}}\leq \phi_1 \leq \phi_2\leq\|\phi_2\|_{\mathbb{B}}<\infty$, 
\[\label{eq:re-ext-monotonicity}
	u_{E_2,\phi_1} \leq u_{E_1,\phi_1} \leq u_{E_1, \phi_2}.\]

The following lemma tells a relation between an extremal function on \(X\) and the pullback of a weighted relative extremal functions on \(\mathbb{B}\) by a chart on \(X\). 
\begin{lem} \label{lem:equivalence-notions} Let $\emptyset\neq K$ be a compact non-pluripolar subset of \(X\) and \(a\in K\). Let \((\Omega,\tau)\) be the holomorphic chart centered at \(a\) given in \eqref{eq:c-ball} and \(\rho_1, \rho_2\) be the functions below \eqref{eq:c-ball}. If \(K\subset{\Omega}\), then there exist constants 
$0<M=M(K,\Omega,\|\rho_2\|_{\Omega})$ and 
$0<m=m(\liminf_{x\to\partial\Omega}\rho_1(x),\|\rho_1\|_{\Omega})$ 
such that,
for $\hat{\rho_j}:= \rho_j\circ \tau^{-1}$,
$$
 V_K\circ\tau^{-1}(z)+\hat{\rho_2}(z)
 \leq M\, u_{\tau(K),\hat{\rho_2}} (z), \quad z\in\mathbb{B},
$$
$$
 m\, u_{\tau(K),\hat{\rho_1}} (z) \leq 
 V_K\circ\tau^{-1}(z)+\hat{\rho_1}(z),\quad z\in\mathbb{B}.
$$
In fact, the second inequality holds even when \(K\) is pluripolar.
\end{lem}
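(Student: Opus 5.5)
The plan is to prove each inequality by a competitor argument: a competitor for one extremal function is transformed, via the local potentials \(\rho_1,\rho_2\) and a rescaling, into a competitor for the other. Throughout, \(\rho_1,\rho_2\ge 0\) are bounded on \(\Omega\), and \(\omega\le dd^c\rho_2\), \(dd^c\rho_1\le\omega\).

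\emph{First inequality.} Since \(K\) is non-pluripolar, Proposition~\ref{prop:pluripolarextremal} gives \(M_K=\sup_X V_K<\infty\). Set
\[
M:=M_K+\|\rho_2\|_{\Omega}+1 \ (\ge 1).
\]
Let \(v\in PSH(X,\omega)\) with \(v|_K\le 0\). Then \(v\le V_K\le M_K\) on \(X\). On \(\Omega\) the function \(v+\rho_2\) is psh, because \(dd^c(v+\rho_2)\ge dd^cv+\omega\ge0\) and \(v+\rho_2\) is quasi-psh. Put \(w:=M^{-1}(v+\rho_2)\circ\tau^{-1}\in PSH(\mathbb{B})\); we check it is a competitor for \(u_{\tau(K),\hat\rho_2}\).
\begin{itemize}
\item On \(\tau(K)\): \(w\le M^{-1}\hat\rho_2\le\hat\rho_2\), using \(v\le0\) on \(K\), \(\hat\rho_2\ge0\) and \(M\ge1\).
\item On all of \(\mathbb{B}\): \(w\le M^{-1}(M_K+\|\rho_2\|_\Omega)\le 1\le \hat\rho_2+1\).
\end{itemize}
Hence \(w\le u_{\tau(K),\hat\rho_2}\), that is, \(v\circ\tau^{-1}+\hat\rho_2\le M u_{\tau(K),\hat\rho_2}\). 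Taking the supremum over all such \(v\) gives the first inequality.

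\emph{Second inequality.} This direction needs no finiteness of \(M_K\), so non-pluripolarity is not used. Let
\[
c:=\liminf_{x\to\partial\Omega}\rho_1(x)>0,\qquad m:=\frac{c}{2(1+\|\rho_1\|_\Omega)}\in(0,1/2].
\]
Let \(w\in PSH(\mathbb{B})\) with \(w\le\hat\rho_1\) on \(\tau(K)\) and \(w\le\hat\rho_1+1\). On \(\Omega\) define
\[
g:=m\,w\circ\tau-\rho_1 .
\]
It is quasi-psh with \(\omega+dd^cg\ge \omega-dd^c\rho_1\ge0\), so \(g\in PSH(\Omega,\omega)\). It satisfies two bounds.
\begin{itemize}
\item On \(K\): \(g\le(m-1)\rho_1\le0\).
\item On \(\Omega\): \(g\le m(1+\|\rho_1\|_\Omega)-\rho_1=c/2-\rho_1\), hence \(\limsup_{\Omega\ni y\to x}g(y)\le c/2-c<0\) for each \(x\in\partial\Omega\).
\end{itemize}

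\emph{Gluing.} This is the only delicate point. Apply Proposition~\ref{prop:envelope}(i) with \(U_1=\Omega\), \(U_2=X\) (so \(\overline{U_1}\subset U_2\) trivially), \(v_1=g\) and \(v_2\equiv0\in PSH(X,\omega)\). The boundary condition holds by the second bound above. So the function \(\tilde g\), equal to \(\max\{g,0\}\) on \(\Omega\) and \(0\) on \(X\setminus\Omega\), lies in \(PSH(X,\omega)\). It vanishes on \(K\subset\Omega\), because \(g\le0\) there. Hence \(\tilde g\le V_K\), and in particular \(g\le V_K\) on \(\Omega\), i.e.
\[
m\,w\le V_K\circ\tau^{-1}+\hat\rho_1 \quad\text{on }\mathbb{B}.
\]
Taking the supremum over \(w\) gives the second inequality. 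The constant \(m\) depends only on \(c\) and \(\|\rho_1\|_\Omega\), as claimed, and nothing in this argument used that \(K\) is non-pluripolar.
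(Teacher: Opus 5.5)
Your proof is correct and follows the same competitor argument as the paper for both inequalities. The only differences are cosmetic. In the gluing step you use the constant $0\in PSH(X,\omega)$, whereas the paper uses $V_O$ for an open neighbourhood $O$ of $K$; your choice is simpler and equally valid, since $V_O=0$ on $K$ anyway. The factor $1/2$ in your $m$ is harmless but unnecessary, because Proposition~\ref{prop:envelope}(i) only needs the non-strict boundary inequality that the paper's $m=c/(1+\|\rho_1\|_\Omega)$ already gives.
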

\begin{proof} We use the proof of \eqref{eq:compare-C} in \cite[Proposition~5.3.3]{Kl91}. 
$\sup_\Omega V_K^*=\|V_K^*\|_{\Omega} <\infty$ by Proposition~\ref{prop:pluripolarextremal} as \(K\) is not pluripolar. 
Let $v\in PSH(X,\omega)$, $v|_K\leq 0$. 
Then $v +\rho_2$ is in $ PSH({\Omega})$. Take $M:=\|V_K^*\|_{\Omega} +\|\rho_2\|_{\Omega}+1$. 
Since $M\geq1$ and $\rho_2 \geq 0$, 
$$(v +\rho_2)|_{K} \leq {\rho_2}|_K\leq M\rho_2|_K, 
\quad v+\rho_2\leq M\leq M(1+\rho_2).$$ 
These mean \(v+\rho_2\leq M u_{\tau(K),\hat{\rho_2}}\circ \tau\). Since \(v\) was an arbitrary competitor for \(V_K\), 
we get the first inequality.

To obtain the second inequality,
Let $m':=\liminf_{x\to\partial\Omega} \rho_1(x)>0$. 
Take an open neighborhood \(O\) of \(K\). Since \(O\) is not pluripolar, \(V_O=V_O^*\in PSH(X,\omega)\) by Propositions~\ref{prop:pluripolarextremal}, \ref{prop:elementary}. 
Let $v \in PSH(\mathbb{B})$ be a competitor for \(u_{\tau(K),\hat{\rho_1}}\). In other words, $v|_{\tau(K)} \leq \hat{\rho_1}|_{\tau(K)}$, $v\leq \hat{\rho_1}+1$. Define 
$$v_O|_\Omega:=\max\{\frac{m'}{1+\|\rho_1\|_{\Omega}}v\circ\tau-\rho_1,V_O\}|_\Omega,\quad
v_O|_{X\setminus\Omega}:=V_O|_{X\setminus\Omega}.$$ 
By Proposition~\ref{prop:envelope}-(i), as
$$\limsup_{x\to \partial\Omega} 
[\frac{m'
}{1+\|\rho_1\|_{\Omega}} v\circ \tau (x)
- \rho_1(x)] \leq 
m'- \liminf_{x\to\partial \Omega} \rho_1(x)
=0\leq V_{O},$$ 
we have \(v_O\in PSH(X,\omega)\).
Let $m:=m'/(1+\|\rho_1\|_\Omega)$. 
Since \(m\in (0,1)\), $v\circ\tau|_K\leq\rho_1|_K$ and $\rho_1\geq0$, we have $(mv\circ\tau-\rho_1)|_K\leq0$. Also, \(V_O=0\) on \(K\). Therefore, \(v_O\leq V_K\), \(mv\circ\tau-\rho_1\leq V_K|_\Omega\). 
Since \(v\) was an arbitrary competitor for \(u_{\tau(K),\hat{\rho_1}}\), the second inequality holds.
\end{proof}

\end{document}